\address{Department of Algebra, Faculty of Mathematics and Physics, Charles University in Prague, Sokolovsk\'a 83, 186 75 Praha, Czech Republic}
\email{shaul@karlin.mff.cuni.cz}
\newtheorem{thm}[equation]{Theorem}
\newtheorem*{thm*}{Theorem}
\newtheorem*{cor*}{Corollary}
\newtheorem*{dfn*}{Definition}
\newtheorem{cthm}{Theorem}
\newtheorem{que}[cthm]{Question}
\newtheorem{cor}[equation]{Corollary}
\newtheorem{prop}[equation]{Proposition}
\newtheorem{lem}[equation]{Lemma}
\theoremstyle{definition}
\newtheorem{dfn}[equation]{Definition}
\newtheorem{rem}[equation]{Remark}
\newtheorem{exa}[equation]{Example}
\newcommand{\opn}{\operatorname}
\newcommand{\cat}[1]{\operatorname{\mathsf{#1}}}
\newcommand{\mrm}[1]{\mathrm{#1}}
\newcommand{\mbb}[1]{\mathbb{#1}}
\newcommand{\K}{\mbb{K} \hspace{0.05em}}
\newcommand{\injdim}{\operatorname{inj\,dim}}
\newcommand{\projdim}{\operatorname{proj\,dim}}
\newcommand{\flatdim}{\operatorname{flat\,dim}}
\newcommand{\amp}{\operatorname{amp}}
\def\skewtimes{\ltimes\!}
\newcommand{\op}{\opn{op}}
\title{The finitistic dimension conjecture via DG-rings}
\author{Liran Shaul}
\dedicatory{Dedicated to Henning Krause on the occasion of his 60th birthday} 
\begin{document}
\begin{abstract}
Given an associative ring $A$,
we present a new approach for establishing the finiteness of the big finitistic projective dimension $\opn{FPD}(A)$.
The idea is to find a sufficiently nice non-positively graded differential graded ring $B$ such that $\mrm{H}^0(B) = A$ and such that $\opn{FPD}(B) < \infty$.
We show that one can always find such a $B$ provided that $A$ is noetherian and has a noncommutative dualizing complex.
We then use the intimate relation between $\cat{D}(B)$ and $\cat{D}(\mrm{H}^0(B))$ to deduce results about $\opn{FPD}(A)$.
As an application,
we generalize a recent sufficient condition of Rickard, for $\opn{FPD}(A) < \infty$ in terms of generation of $\cat{D}(A)$ from finite dimensional algebras over a field to all noetherian rings which admit a dualizing complex.
\end{abstract}

\numberwithin{equation}{section}
\maketitle

\setcounter{section}{-1}

\section{Introduction}

All rings in this paper are associative and unital, and modules by default are left modules. The finitistic projective dimension of a ring $A$, denoted by $\opn{FPD}(A)$ is the supremum of the projective dimension among all $A$-modules of finite projective dimension.
It is a major open problem whether this quantity is finite for artin algebras. See the survey \cite{Zim} for details.
One key reason for the importance of this conjecture is that it implies many other important homological conjectures.
The aim of this paper is to present a new approach for establishing the finiteness of the finitistic projective dimension of a ring $A$.

In a recent paper \cite{Rickard}, Rickard, resolving a conjecture of Keller,
showed that for a finite dimensional algebra $A$ over a field,
if the injective $A$-modules generate the unbounded derived category $\cat{D}(A)$, then $\opn{FPD}(A) < \infty$. 
This suggests that by enlarging the category that one works with,
here, from modules to the unbounded derived category,
one is able to obtain strong results about the finitistic dimension.
This is the approach taken in this paper,
and we use this idea in two different directions.

First, we enlarge the category of rings to the category of non-positively graded (in cohomological grading) differential graded rings.
Then, given a ring $A$, we construct a nicely behaved differential graded ring $B$ with the property that $\mrm{H}^0(B) = A$. Such an equality implies there are strong relations between the derived categories $\cat{D}(A)$ and $\cat{D}(B)$, because $\cat{D}(B)$ is equipped with a natural t-structure whose heart is equal to $\opn{Mod}(A)$.

As shown recently in \cite{DGFinite}, it is possible to extend the definition of the finitistic dimension to DG-rings. 
For a very large class of rings $A$, 
which includes all finite dimensional algebras over a field,
we show in \cref{cor:existDGFFD}  that one can find a DG-ring $B$ such that $\mrm{H}^0(B) = A$,
and such that moreover $\opn{FPD}(B) < \infty$.

This raises the following important question:
\begin{que}\label{theQuestion}
Let $B$ be a left noetherian DG-ring with bounded cohomology.
If $\opn{FPD}(B) < \infty$, does this imply that $\opn{FPD}(\mrm{H}^0(B))<\infty$?
\end{que}

A positive answer to this question will establish the big finitistic dimension conjecture for finite dimensional algebras over a field.
As we show in \cref{thm:commutative},
the results of \cite{DGFinite} imply
that \cref{theQuestion} has a positive answer provided that $B$ is assumed to be commutative. In fact, in that case, $\opn{FPD}(B) < \infty$ if and only if $\opn{FPD}(\mrm{H}^0(B))<\infty$.

To study \cref{theQuestion} further, we limit the scope of our investigation to DG-rings $B$ for which one can lift any $\mrm{H}^0(B)$-module $\overline{M}$ of finite projective dimension to a DG-module $M$ over $B$ such that $\mrm{H}^0(B) \otimes^{\mrm{L}}_B M = \overline{M}$. 
The DG-ring we construct in \cref{cor:existDGFFD} satisfies this condition.
We show in \cref{thm:boundlift} that for such a DG-ring $B$,
for which a lifting exists and $\opn{FPD}(B) < \infty$,
the question whether $\opn{FPD}(\mrm{H}^0(B)) < \infty$ is actually equivalent to certain properties of the lifting.

To construct such a DG-ring $B$,
for a given ring $A$,
we make use of the work of Yekutieli \cite{Yek} about noncommutative dualizing complexes.
Following work of J{\o}rgensen \cite{Jor} in commutative algebra, we study for a noncommutative ring $A$ with a dualizing complex $R$,
the trivial extension DG-ring $A \skewtimes R$.
We then show in \cref{thm:trivialExt} that it is always Gorenstein.
Generalizing a result of Bass \cite{Bass} from rings to DG-rings, we prove in \cref{thm:injDimFFD} that any such Gorenstein DG-ring has finite finitistic dimension.

We then use the differential graded techniques to generalize Rickard's theorem. 
For our argument to work, the unbounded derived category is too small,
and instead we have to consider the three homotopy categories $\cat{K}(\opn{Inj}(A))$, $\cat{K}(\opn{Proj}(A))$ and $\cat{K}(\opn{Flat}(A))$ associated to a ring $A$.
Working with these larger triangulated categories,
and using the Iyengar-Krause covariant Grothendieck duality theorem \cite{InKr},
as well as some deep results of Neeman \cite{Neeman2008} about $\cat{K}(\opn{Flat}(A))$
we obtain a very broad generalization of the mentioned theorem of Rickard \cite[Theorem 4.3]{Rickard}. Precisely, we show in \cref{cor:main} the following:

\begin{cthm}\label{cthm:main}
Let $A$ be a ring which is left and right noetherian and has a dualizing complex.
Assume that the localizing subcategory generated by the injective $A$-modules is equal to $\cat{D}(A)$.
Then $\opn{FPD}(A) < \infty$.
\end{cthm}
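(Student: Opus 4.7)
The plan is to combine the DG-ring construction developed earlier in the paper with the Iyengar--Krause covariant Grothendieck duality and Neeman's results on $\cat{K}(\opn{Flat}(A))$, generalizing Rickard's argument from finite-dimensional algebras to noetherian rings admitting a dualizing complex. The hypothesis that the injectives generate $\cat{D}(A)$ will enter only at the end, to control the DG-module lifts produced by the earlier machinery.

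Because $A$ is left and right noetherian and admits a dualizing complex, \cref{cor:existDGFFD} produces a Gorenstein DG-ring $B$ (morally, the trivial extension $A \skewtimes R$) with $\mrm{H}^0(B) = A$; by \cref{thm:injDimFFD} we have $\opn{FPD}(B) < \infty$, and by construction every $A$-module $\overline{M}$ of finite projective dimension admits a lift to a DG-$B$-module $M$ with $\mrm{H}^0(B) \otimes^{\mrm{L}}_B M \simeq \overline{M}$. By \cref{thm:boundlift}, the finiteness $\opn{FPD}(A) < \infty$ is now equivalent to a concrete uniform-boundedness property of such lifts $M$, so the remaining task is to verify this property under the standing generation hypothesis.

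To carry out this verification, I would pass from $\cat{D}(A)$ to the larger homotopy categories $\cat{K}(\opn{Inj}(A))$, $\cat{K}(\opn{Proj}(A))$ and $\cat{K}(\opn{Flat}(A))$. The Iyengar--Krause covariant Grothendieck duality provides an equivalence $\cat{K}(\opn{Inj}(A)) \simeq \cat{K}(\opn{Proj}(A))$ arising from the dualizing complex, and Neeman's description of $\cat{K}(\opn{Flat}(A))$ modulo its pure-acyclics identifies the latter with a distinguished subcategory of $\cat{K}(\opn{Flat}(A))$. Under these equivalences the hypothesis that the injective $A$-modules generate $\cat{D}(A)$ as a localizing subcategory should transport to a dual generation statement for $A$ (or for a free module) inside $\cat{K}(\opn{Proj}(A))$ by the specific compact objects coming from the dualizing complex. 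This controls the flat and projective resolutions of the lifts $M$ over $B$ and yields the quantitative bound required by \cref{thm:boundlift}.

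The main obstacle, I expect, is precisely this transport of the generation hypothesis: the abstract statement in $\cat{D}(A)$ is too coarse to deliver a uniform quantitative bound — as the author explicitly notes, the unbounded derived category is too small — so the argument must be conducted at the level of $\cat{K}(\opn{Inj}(A))$ and $\cat{K}(\opn{Flat}(A))$, where finite projective dimension behaves more rigidly. A related technical point is ensuring compatibility between the DG-lift construction over $B$ and the Iyengar--Krause duality over $A$, so that the boundedness extracted on the $A$-side translates faithfully into the condition required by \cref{thm:boundlift} on the $B$-side; checking this compatibility, rather than the categorical reformulations themselves, is where I would anticipate the proof to be the most delicate.
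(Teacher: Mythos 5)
Your proposal assembles the right ingredients (the trivial extension DG-ring $B = A \skewtimes R$ with $\opn{FPD}(B) < \infty$ from \cref{cor:existDGFFD}, the Iyengar--Krause equivalence, Neeman's results on $\cat{K}(\opn{Flat}(A))$), and the reduction you make via \cref{thm:boundlift} is legitimate: since $\pi_B$ has a section, $\opn{FPD}(A) < \infty$ is indeed equivalent to $B$ having bounded lifting. But the step that is supposed to carry the entire weight --- that the generation hypothesis ``should transport to a dual generation statement'' which then ``controls the flat and projective resolutions of the lifts $M$ over $B$ and yields the quantitative bound required by \cref{thm:boundlift}'' --- is not an argument; no mechanism is given by which a qualitative statement (a localizing subcategory being all of $\cat{D}(A)$) produces a uniform numerical bound on $\inf$ of lifts. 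This is precisely the hard point, and the paper never proves bounded lifting directly; that property only follows a posteriori once $\opn{FPD}(A) < \infty$ is known.

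The paper's actual route is a contradiction argument with a quite different shape, and the DG-machinery enters in the opposite direction from what you propose. Assuming $\opn{FPD}(A) = \infty$, one uses $\opn{FPD}(B) < \infty$ applied to the lifts $N_n = B \otimes^{\mrm{L}}_A M_n$ (together with \cref{cor:FPDisFFD} and a pigeonhole on $\inf(N_n)$) to produce, in \cref{thm:sequence}, modules $M_n$ with $\flatdim_A(M_n) = a_n \to \infty$ and $\opn{Tor}^A_{a_n}(R,M_n) \ne 0$ for a single fixed shift of the dualizing complex. Then, following Rickard, the comparison map $\bigoplus_n P_n[-a_n] \to \prod_n P_n[-a_n]$ of shifted flat resolutions is a quasi-isomorphism but not a homotopy equivalence (detected by tensoring with a bounded complex of finitely presented right modules representing $R$, replacing Rickard's finite-dimensionality argument); its cone is a non-zero acyclic object of $\cat{K}(\opn{Flat}(A))$, which Neeman's adjoint $q$ and his acyclicity criterion convert into a non-zero acyclic bounded-below complex $C$ of projectives, and the equivalence $R \otimes_A - : \cat{K}(\opn{Proj}(A)) \to \cat{K}(\opn{Inj}(A))$ turns $C$ into a non-zero object of $\cat{D}(A)$ receiving no maps from $R$ in any degree --- contradicting generation, after \cref{prop:injGenerate} is used to replace ``injectives generate'' by ``$R$ generates.'' So the generation hypothesis is used only to rule out the existence of such an orthogonal object, not to bound lifts; without this (or some comparable concrete mechanism, including the reduction to a single generator $R$, which your sketch also omits) your plan does not close.
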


Finally, we show in \cref{exa:nessDual} that the condition that $A$ has a dualizing complex is necessary. In the absence of a dualizing complex,
the fact that the injectives generate does not imply that the ring has finite finitistic dimension. It follows that in general \cref{cthm:main} cannot be improved, and that it precisely identifies the class of rings for which generation by injectives implies finite finitistic dimension.

\section{Finitistic dimensions over differential graded rings}

A non-positive differential graded ring is a graded ring of the form 
\[
A = \bigoplus_{n=-\infty}^0 A^n
\]
equipped with a differential of degree $+1$ which satisfies a Leibniz rule. All DG-rings in this paper are assumed to be non-positive. We refer the reader to \cite{Kel,YeBook} for more details about DG-rings and their derived categories.
DG-modules over DG-rings will be assumed to be left DG-modules by default.
The derived category of left DG-modules over a DG-ring $A$ will be denoted by $\cat{D}(A)$. It is a triangulated  category. Its full subcategory consisting of DG-modules with bounded cohomology will be denoted by $\cat{D}^{\mrm{b}}(A)$.
We say that $A$ is left noetherian if the ring $\mrm{H}^0(A)$ is left noetherian and for all $i<0$ the left $\mrm{H}^0(A)$-module $\mrm{H}^i(A)$ is finitely generated.
We further say that $A$ has bounded cohomology if $\mrm{H}^i(A) = 0$ for all $i\ll 0$.
Given $M \in \cat{D}(A)$,
we define its infimum, supremum and amplitude by
\[
\inf(M) = \inf\{n\mid \mrm{H}^n(M) \ne 0\},
\quad
\sup(M) = \sup\{n\mid \mrm{H}^n(M) \ne 0\},
\]
and $\amp(M) = \sup(M) - \inf(M)$.

For a pair of left DG-modules $M,N \in \cat{D}(A)$,
we set $\opn{Ext}^i_A(M,N) := \mrm{H}^i\left(\mrm{R}\opn{Hom}_A(M,N)\right)$.
If $M$ is a right DG-module over $A$ and $N$ is a left DG-module over $A$, we further set $\opn{Tor}^A_n(M,N) := \mrm{H}^{-n}(M\otimes^{\mrm{L}}_A N)$.

Using these definitions,
the projective, injective and flat dimension of complexes over rings carry over to the differential graded case. 
For $M \in \cat{D}^{\mrm{b}}(A)$ we define them by the formulas
\[
\projdim_A(M) = \inf\{n \in \mathbb{Z} \mid \opn{Ext}_A^i(M,N) = 0 \text{ for any $N \in \cat{D}^\mrm{b}(A)$ and any $i > n + \sup(N)$}\},
\]
\[
\injdim_A(M) = \inf\{n \in \mathbb{Z} \mid \opn{Ext}_A^i(N,M) = 0 \text{ for any $N \in \cat{D}^\mrm{b}(A)$ and any $i > n - \inf(N)$}
\},
\]
and
\[
\flatdim_A(M) = \inf\{n \in \mathbb{Z} \mid \opn{Tor}^A_i(N,M) = 0\text{ for any $N \in \cat{D}^\mrm{b}(A^{\op})$ and any $i > n - \inf(N)$}\}.
\]
As shown in \cite[Proposition 1.4]{DGFinite},
it is enough to test these on DG-modules with amplitude $0$.

Next is a key definition for this paper, following \cite{DGFinite}.

\begin{dfn}
Let $A$ be a non-positive DG-ring with bounded cohomology.
We define the finitistic projective dimension of $A$,
denoted by $\opn{FPD}(A)$, by the formula
\[
\opn{FPD}(A) = \sup\{\projdim_A(M) + \inf(M) \mid M \in \cat{D}^{\mrm{b}}(A), \projdim_A(M) < \infty\}.
\]
\end{dfn}

For an ordinary ring, one can show that this definition coincides with the usual definition of the finitistic dimension in terms of projective dimension of modules. The term $\inf(M)$ in it is used to normalize the situation for complexes which are not necessarily modules: the quantity $\projdim_A(M) + \inf(M)$ is invariant under shifts.

The following result, which is the raison d'\^etre of this paper, is about \textbf{commutative} DG-rings,
and follows easily from the results of \cite{DGFinite}:

\begin{thm}\label{thm:commutative}
Let $A$ be a commutative noetherian DG-ring with bounded cohomology. 
Then $\opn{FPD}(A) < \infty$ if and only if $\opn{FPD}(\mrm{H}^0(A)) < \infty$.
\end{thm}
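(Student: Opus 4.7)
The plan is to deduce this from the comparison between projective dimensions over $A$ and over $\bar A := \mrm{H}^0(A)$ established in \cite{DGFinite} for commutative noetherian DG-rings with bounded cohomology. Concretely, the statement I would use is that for $M \in \cat{D}^{\mrm{b}}(A)$ one has $\projdim_A(M) < \infty$ if and only if $\projdim_{\bar A}(\bar A \otimes^{\mrm{L}}_A M) < \infty$, together with a two-sided bound of the form $|\projdim_A(M) + \inf(M) - \projdim_{\bar A}(\bar A \otimes^{\mrm{L}}_A M) - \inf(\bar A \otimes^{\mrm{L}}_A M)| \le c(A)$, where $c(A)$ depends only on $\amp(A)$. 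This is the sort of derived Nakayama/faithful-flat-base-change-type formula that the commutative setting makes available, and identifying its precise form in \cite{DGFinite} is the main point on which the argument rests.

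For the implication $\mrm{FPD}(\bar A) < \infty \Rightarrow \mrm{FPD}(A) < \infty$, I would take any $M \in \cat{D}^{\mrm{b}}(A)$ with $\projdim_A(M) < \infty$ and pass to $\bar M := \bar A \otimes^{\mrm{L}}_A M$, which by the cited comparison lies in $\cat{D}^{\mrm{b}}(\bar A)$ and has finite projective dimension over $\bar A$. The definition of $\mrm{FPD}(\bar A)$ then bounds $\projdim_{\bar A}(\bar M) + \inf(\bar M)$ uniformly by $\mrm{FPD}(\bar A)$, and the two-sided estimate transports this to a uniform bound on $\projdim_A(M) + \inf(M)$, giving $\mrm{FPD}(A) \le \mrm{FPD}(\bar A) + c(A) < \infty$.

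For the reverse direction I would start with an $\bar A$-module $\bar N$ with $\projdim_{\bar A}(\bar N) < \infty$, regard it as an object of $\cat{D}^{\mrm{b}}(A)$ concentrated in degree $0$ by restriction of scalars along the canonical morphism $A \to \bar A$, and again invoke the comparison to conclude $\projdim_A(\bar N) < \infty$. Since $\inf(\bar N) = 0$, the inequality $\projdim_{\bar A}(\bar N) \le \projdim_A(\bar N) + c(A) \le \mrm{FPD}(A) + c(A)$ is uniform in $\bar N$, hence $\mrm{FPD}(\bar A) < \infty$. Extending from modules to complexes in $\cat{D}^{\mrm{b}}(\bar A)$ is immediate, since the normalization $\projdim_{\bar A}(\bar N) + \inf(\bar N)$ is shift-invariant.

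The main (and essentially only) obstacle is pinning down the exact form of the comparison lemma in \cite{DGFinite} and verifying that the correction term is indeed a constant depending only on $A$; the author's phrase ``follows easily from the results of \cite{DGFinite}'' suggests this comparison is present in the stated strength. Once it is quoted, the proof is essentially bookkeeping, and the $\inf$-normalization built into the definition of $\mrm{FPD}$ is precisely what allows the bounded correction between $A$ and $\bar A$ to be absorbed, keeping both finitistic dimensions finite together.
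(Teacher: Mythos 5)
Your argument rests on a comparison statement that is stronger than anything in \cite{DGFinite}, and your reverse direction contains a concrete error. What \cite[Corollary 1.5]{DGFinite} gives (and what the paper itself uses in \cref{eqn:projdim}) is the equality $\projdim_A(M)=\projdim_{\mrm{H}^0(A)}\bigl(\mrm{H}^0(A)\otimes^{\mrm{L}}_A M\bigr)$; it gives no control of $\inf\bigl(\mrm{H}^0(A)\otimes^{\mrm{L}}_A M\bigr)$ in terms of $\inf(M)$ up to a constant depending only on $\amp(A)$, and such a two-sided bound is not available as a formal statement. A warning sign: your bookkeeping never uses commutativity, so if the ``comparison lemma'' held in the strength you assume, the same argument would answer \cref{theQuestion} affirmatively for all noetherian DG-rings with bounded cohomology, which the paper emphasizes is open. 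The step that actually fails is the reverse direction: you take $\overline{N}$ with $\projdim_{\mrm{H}^0(A)}(\overline{N})<\infty$, regard it in $\cat{D}^{\mrm{b}}(A)$ by restriction of scalars, and claim the comparison yields $\projdim_A(\overline{N})<\infty$. But the comparison computes $\projdim_A(M)$ from the reduction $\mrm{H}^0(A)\otimes^{\mrm{L}}_A M$, and the reduction of $\opn{Res}_A(\overline{N})$ is $\mrm{H}^0(A)\otimes^{\mrm{L}}_A\overline{N}$, not $\overline{N}$. For example, for the commutative noetherian DG-ring $A=k\skewtimes k[1]$ and $\overline{N}=k=\mrm{H}^0(A)$ one has $\projdim_{\mrm{H}^0(A)}(\overline{N})=0$, while $k\otimes^{\mrm{L}}_A k$ is unbounded and $\projdim_A(\opn{Res}_A k)=\infty$; restriction of scalars does not preserve finite projective dimension. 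This is precisely why the paper replaces restriction by the notion of a lifting $M$ with $\mrm{H}^0(A)\otimes^{\mrm{L}}_A M\cong\overline{N}$, and why the substantive issue is whether such liftings exist with uniformly bounded amplitude (\cref{thm:boundlift}); your forward direction has the same unproved ingredient, since it needs $\inf(M)-\inf\bigl(\mrm{H}^0(A)\otimes^{\mrm{L}}_A M\bigr)$ bounded independently of $M$.

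The paper's actual proof is entirely different and genuinely commutative: by \cite{RG}, for the commutative noetherian ring $\mrm{H}^0(A)$ one has $\opn{FPD}(\mrm{H}^0(A))<\infty$ if and only if $\dim(\mrm{H}^0(A))<\infty$, and then \cite[Theorem 6.2]{DGFinite} gives $\opn{FPD}(A)<\infty$ when $\dim(\mrm{H}^0(A))<\infty$, while \cite[Theorem 5.6]{DGFinite} gives $\opn{FPD}(A)=\infty$ when $\dim(\mrm{H}^0(A))=\infty$. So the bridge between $A$ and $\mrm{H}^0(A)$ is the Krull dimension of $\mrm{H}^0(A)$, not a formal Nakayama-type transfer; if you want to salvage your approach, you would have to prove the bounded-lifting statement, which is exactly the open difficulty the paper isolates.
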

\begin{proof}
Since $\mrm{H}^0(A)$ is a commutative noetherian ring,
by \cite{RG}, it holds that $\opn{FPD}(\mrm{H}^0(A)) < \infty$ if and only if $\dim(\mrm{H}^0(A)) < \infty$.
If $\dim(\mrm{H}^0(A)) < \infty$ then by \cite[Theorem 6.2]{DGFinite} it holds that $\opn{FPD}(A) < \infty$,
while if $\dim(\mrm{H}^0(A)) = \infty$,
then it follows from \cite[Theorem 5.6]{DGFinite} that $\opn{FPD}(A) = \infty$.
\end{proof}

\section{Formal reduction results for the finitistic dimension}

Let $A$ be a non-positive DG-ring.
In this section we discuss relations between the finitistic projective dimension of $A$ and the finitistic projective dimension of $\mrm{H}^0(A)$,
in an attempt to imitate \cref{thm:commutative} in noncommutative contexts.
To obtain such a relation, 
we define the following conditions on $A$,
which allows one to lift certain $\mrm{H}^0(A)$-modules to DG-modules over $A$.

\begin{dfn}
Let $A$ be a non-positive DG-ring.
\begin{enumerate}
\item We say that $A$ \textit{lifts modules of finite projective dimension} if for any $\overline{M} \in \opn{Mod}(\mrm{H}^0(A))$ such that 
\[
\projdim_{\mrm{H}^0(A)}(\overline{M}) < \infty,
\]
there exists $M \in \cat{D}^{\mrm{b}}(A)$ such that $\mrm{H}^0(A)\otimes^{\mrm{L}}_A M \cong \overline{M}$.
In that case we call $M$ a lifting of $\overline{M}$.
\item In this situation, we say that $A$ has \textit{bounded lifting} if there exists a bound $N \in \mathbb{N}$ such that for any such $\overline{M}$ there exists a lifting $M$ with $\amp(M)\le N$.
\end{enumerate}
\end{dfn}

Here are a couple of basic facts about these notions.
\begin{prop}\label{eqn:projdim}
Let $A$ be a non-positive DG-ring.
\begin{enumerate}
\item If $M \in \cat{D}^{\mrm{b}}(A)$ is a lifting of a module $\overline{M} \in \opn{Mod}(\mrm{H}^0(A))$ of finite projective dimension,
then $\projdim_A(M) = \projdim_{\mrm{H}^0(A)}(\overline{M})$.
In particular, $M$ has finite projective dimension over $A$.
\item If $A$ lifts modules of finite projective dimension,
then the lifting is bounded if and only if there exists some $N \in \mathbb{N}$ such that for any such $\overline{M}$ there exists a lifting $M$ with $\inf(M)\ge -N$.
\end{enumerate}
\end{prop}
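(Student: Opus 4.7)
For part (1), my plan is to use the standard derived adjunction along the canonical DG-ring surjection $A \to \mrm{H}^0(A)$. For any $\overline{N} \in \opn{Mod}(\mrm{H}^0(A))$, viewed as a DG-module over $A$ by restriction of scalars, this yields
\[
\mrm{R}\opn{Hom}_A(M,\overline{N}) \;\cong\; \mrm{R}\opn{Hom}_{\mrm{H}^0(A)}\!\left(\mrm{H}^0(A)\otimes^{\mrm{L}}_A M,\, \overline{N}\right) \;\cong\; \mrm{R}\opn{Hom}_{\mrm{H}^0(A)}(\overline{M}, \overline{N}).
\]
I would then invoke \cite[Proposition 1.4]{DGFinite} to restrict the test DG-modules appearing in the definition of $\projdim_A(M)$ to those of amplitude~$0$. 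Since the natural t-structure on $\cat{D}(A)$ has heart $\opn{Mod}(\mrm{H}^0(A))$, every amplitude-$0$ object is isomorphic in $\cat{D}(A)$ to a shift $\overline{N}[-k]$ with $\overline{N} \in \opn{Mod}(\mrm{H}^0(A))$ and $k = \sup(N)$. Plugging this into the definition and applying the displayed isomorphism translates the condition ``$\opn{Ext}^i_A(M,\overline{N}[-k]) = 0$ for all $i > n + k$'' precisely into ``$\opn{Ext}^j_{\mrm{H}^0(A)}(\overline{M},\overline{N}) = 0$ for all $j > n$'', yielding $\projdim_A(M) = \projdim_{\mrm{H}^0(A)}(\overline{M})$. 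Finiteness follows.

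For part (2), my plan is to establish the single clean fact that \emph{every} lifting $M$ of a nonzero $\overline{M}$ satisfies $\sup(M) = 0$. Granted this, $\amp(M) = -\inf(M)$ for every lifting, and so the conditions ``$\amp(M) \le N$ uniformly'' and ``$\inf(M) \ge -N$ uniformly'' become literally the same statement --- the same choice of lifting serves both directions. The inequality $\sup(M) \ge 0$ is immediate from right t-exactness of $\mrm{H}^0(A)\otimes^{\mrm{L}}_A -$ together with $\mrm{H}^0(A)\otimes^{\mrm{L}}_A M \cong \overline{M} \in \opn{Mod}(\mrm{H}^0(A))$. For the reverse inequality, setting $s := \sup(M)$ I would apply $\mrm{H}^0(A)\otimes^{\mrm{L}}_A -$ to the truncation triangle
\[
\tau^{<s}M \to M \to \mrm{H}^s(M)[-s] \to \tau^{<s}M[1]
\]
and read off the degree-$s$ long exact cohomology sequence. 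The two terms coming from $\tau^{<s}M$ vanish in degree $s$ by right t-exactness, whereas the right-hand term contributes $\mrm{H}^s(M)$ in degree $s$ via the Yoneda/adjunction identification $\mrm{H}^0\!\left(\mrm{H}^0(A)\otimes^{\mrm{L}}_A \overline{N}\right) \cong \overline{N}$ for any $\overline{N} \in \opn{Mod}(\mrm{H}^0(A))$. If $s > 0$, then $\mrm{H}^s(\overline{M}) = 0$, forcing $\mrm{H}^s(M) = 0$ and contradicting the choice of $s$.

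The step I expect to require the most care is the identification of amplitude-$0$ objects in $\cat{D}(A)$ with genuine shifts of $\mrm{H}^0(A)$-modules; this is what lets the derived adjunction collapse a DG-ring computation to an ordinary ring computation, and it is the one point at which the structure of the heart enters essentially. Everything else is formal manipulation of t-truncations, adjunctions, and the right t-exactness of base change along $A \to \mrm{H}^0(A)$.
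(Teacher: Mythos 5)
Your proof is correct and follows essentially the same route as the paper: part (1) is precisely the result the paper cites from \cite[Corollary 1.5]{DGFinite}, which you re-derive via the standard adjunction along $A\to\mrm{H}^0(A)$ and the amplitude-zero reduction, and part (2) rests, exactly as in the paper, on the observation that any lifting $M$ of a nonzero $\overline{M}$ satisfies $\sup(M)=\sup\bigl(\mrm{H}^0(A)\otimes^{\mrm{L}}_A M\bigr)=\sup(\overline{M})=0$, hence $\amp(M)=-\inf(M)$. The only difference is that you supply direct arguments (adjunction plus truncation triangles) for facts the paper handles by citation.
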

\begin{proof}
\begin{enumerate}
\item By \cite[Corollary 1.5]{DGFinite}, 
there is an equality 
\[
\projdim_A(M) = \projdim_{\mrm{H}^0(A)}(\mrm{H}^0(A)\otimes^{\mrm{L}}_A M) = \projdim_{\mrm{H}^0(A)}(\overline{M}) < \infty.
\]
\item This follows from the equality
\[
0 = \sup(\overline{M}) = \sup\left(\mrm{H}^0(A)\otimes^{\mrm{L}}_A M\right) = \sup(M)
\]
which implies that $\amp(M) = -\inf(M)$.
\end{enumerate}
\end{proof}

In general, it seems difficult to prove the existence of lifting for all modules of finite projective dimension.
There is however one important special case where lifting is guaranteed.

\begin{dfn}
Let $A$ be a non-positive DG-ring.
We say that the natural projection map $\pi_A:A \to \mrm{H}^0(A)$ has a section if there is a map of DG-rings $\tau_A:\mrm{H}^0(A) \to A$ such that 
$\pi_A \circ \tau_A = 1_{\mrm{H}^0(A)}$.
\end{dfn}

In a later section we will study trivial extension DG-rings, and for them $\pi_A$ will always have a section.
The importance of this notion for us is that the existence of a section guarantees lifting: 

\begin{prop}
Let $A$ be a non-positive DG-ring with bounded cohomology, 
and suppose that the natural projection map $\pi_A:A \to \mrm{H}^0(A)$ has a section $\tau_A$.
Then $A$ lifts modules of finite projective dimension.
\end{prop}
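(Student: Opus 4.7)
The plan is to lift $\overline{M}$ by induction along the section $\tau_A$. Since $\tau_A \colon \mrm{H}^0(A) \to A$ is a morphism of DG-rings, $A$ acquires the structure of a DG-$A$-$\mrm{H}^0(A)$-bimodule: the left $A$-action is multiplication, while the right $\mrm{H}^0(A)$-action is $a \cdot b := a \cdot \tau_A(b)$. For a given $\overline{M} \in \opn{Mod}(\mrm{H}^0(A))$ of finite projective dimension, I propose the candidate
\[
M := A \otimes^{\mrm{L}}_{\mrm{H}^0(A)} \overline{M} \in \cat{D}(A).
\]

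To verify that $M \in \cat{D}^{\mrm{b}}(A)$, let $n := \projdim_{\mrm{H}^0(A)}(\overline{M})$ and take a projective resolution $P \to \overline{M}$ of length $n$ over $\mrm{H}^0(A)$. Then $M$ is represented by the bounded complex $A \otimes_{\mrm{H}^0(A)} P$. Each term $A \otimes_{\mrm{H}^0(A)} P^i$ is a direct summand of a direct sum of copies of the DG-module $A$, so its cohomology lies within the cohomological amplitude of $A$. Combining boundedness of $P$ with the standing hypothesis that $\mrm{H}^*(A)$ is bounded, a standard spectral-sequence estimate on the associated double complex then gives boundedness of the cohomology of the totalization.

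For the isomorphism $\mrm{H}^0(A) \otimes^{\mrm{L}}_A M \cong \overline{M}$, I would use associativity of the derived tensor product:
\[
\mrm{H}^0(A) \otimes^{\mrm{L}}_A M \;\cong\; \bigl(\mrm{H}^0(A) \otimes^{\mrm{L}}_A A\bigr) \otimes^{\mrm{L}}_{\mrm{H}^0(A)} \overline{M}.
\]
Since $A$ is K-projective over itself, $\mrm{H}^0(A) \otimes^{\mrm{L}}_A A \cong \mrm{H}^0(A) \otimes_A A$, and the crucial observation is that this is isomorphic to $\mrm{H}^0(A)$ as an $(\mrm{H}^0(A),\mrm{H}^0(A))$-bimodule. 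Indeed, under the canonical map $x \otimes a \mapsto x \cdot \pi_A(a)$, the right $\mrm{H}^0(A)$-action inherited from $A$ via $\tau_A$ becomes multiplication by $\pi_A(\tau_A(b')) = b'$ for each $b' \in \mrm{H}^0(A)$. Substituting back yields $\mrm{H}^0(A) \otimes^{\mrm{L}}_{\mrm{H}^0(A)} \overline{M} = \overline{M}$, as required.

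The argument is essentially formal. The only genuine subtlety, and the only place where the section hypothesis is actually used, lies in the bimodule bookkeeping in the associativity isomorphism: the identity $\pi_A \circ \tau_A = 1_{\mrm{H}^0(A)}$ is precisely what forces the right $\mrm{H}^0(A)$-action inherited from $\tau_A$ to match the canonical structure on $\mrm{H}^0(A)$. Beyond that, I do not anticipate any deeper obstacle.
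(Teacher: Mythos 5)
Your proposal is correct and follows essentially the same route as the paper: the lift is the same object $M = A \otimes^{\mrm{L}}_{\mrm{H}^0(A)} \overline{M}$ (with $A$ viewed as a right $\mrm{H}^0(A)$-module via $\tau_A$), boundedness comes from the finiteness of $\projdim_{\mrm{H}^0(A)}(\overline{M})$ together with $\amp(A)<\infty$, and the base-change isomorphism $\mrm{H}^0(A)\otimes^{\mrm{L}}_A M \cong \overline{M}$ is obtained from associativity and the identity $\pi_A\circ\tau_A = 1_{\mrm{H}^0(A)}$. You merely spell out in more detail the resolution/spectral-sequence estimate and the bimodule bookkeeping that the paper leaves implicit.
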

\begin{proof}
Let $\overline{M} \in \opn{Mod}(\mrm{H}^0(A))$ be an $\mrm{H}^0(A)$-module of finite projective dimension.
Set $M = A\otimes^{\mrm{L}}_{\mrm{H}^0(A)} \overline{M}$.
Since $\overline{M}$ has finite projective dimension over $\mrm{H}^0(A)$,
in particular $\flatdim_{\mrm{H}^0(A)}(\overline{M}) < \infty$, so the fact that $\amp(A) < \infty$ implies that $M \in \cat{D}^{\mrm{b}}(A)$.
The fact that $\pi_A \circ \tau_A = 1_{\mrm{H}^0(A)}$ then implies that
\[
\mrm{H}^0(A)\otimes^{\mrm{L}}_A M = 
\mrm{H}^0(A)\otimes^{\mrm{L}}_A A\otimes^{\mrm{L}}_{\mrm{H}^0(A)} \overline{M}
\cong \overline{M},
\]
which shows that $A$ lifts modules of finite projective dimension.
\end{proof}

We now show that the above conditions allow us to obtain relations between the finiteness of $\opn{FPD}(A)$ and $\opn{FPD}(\mrm{H}^0(A))$.

\begin{thm}\label{thm:boundlift}
Let $A$ be a non-positive DG-ring with bounded cohomology.
Suppose that $A$ lifts modules of finite projective dimension and $\opn{FPD}(A) < \infty$.
\begin{enumerate}
\item 
If $A$ has bounded lifting then $\opn{FPD}(\mrm{H}^0(A)) < \infty$.
\item If $\pi_A:A \to \mrm{H}^0(A)$ has a section,
then the converse holds: $A$ has bounded lifting if and only if $\opn{FPD}(\mrm{H}^0(A)) < \infty$.
\end{enumerate}
\end{thm}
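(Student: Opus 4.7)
The plan is to prove both statements by translating between an $\mrm{H}^0(A)$-module $\overline{M}$ of finite projective dimension and its DG lifting $M$, using the two parts of \cref{eqn:projdim} together with the definition of $\opn{FPD}$.

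For part (1), I would fix any $\overline{M}$ with $\projdim_{\mrm{H}^0(A)}(\overline{M}) < \infty$ and use the bounded lifting hypothesis to obtain a lifting $M$ with $\amp(M)\le N$. The identity $\sup(M)=\sup(\overline{M})=0$ recorded in the proof of \cref{eqn:projdim}(2) forces $\inf(M)\ge -N$. Substituting $M$ into the defining formula for $\opn{FPD}(A)$ yields $\projdim_A(M)\le \opn{FPD}(A)-\inf(M)\le \opn{FPD}(A)+N$, and \cref{eqn:projdim}(1) identifies this quantity with $\projdim_{\mrm{H}^0(A)}(\overline{M})$. Since the bound does not depend on $\overline{M}$, we conclude $\opn{FPD}(\mrm{H}^0(A))\le \opn{FPD}(A)+N<\infty$.

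For part (2), the forward implication is exactly (1). For the converse, assume $\opn{FPD}(\mrm{H}^0(A))=:F<\infty$, fix a section $\tau_A$, and take the canonical lifting $M:=A\otimes^{\mrm{L}}_{\mrm{H}^0(A)}\overline{M}$ as constructed in the proof of the preceding proposition. Choosing a projective resolution $P_\bullet\to \overline{M}$ of length $d:=\projdim_{\mrm{H}^0(A)}(\overline{M})\le F$, the complex $P_\bullet$ is K-flat over $\mrm{H}^0(A)$ as a bounded complex of flat modules, so $M$ is represented by the total complex of the double complex $A\otimes_{\mrm{H}^0(A)}P_\bullet$. Since each $P_i$ is flat, the $i$-th column has cohomology $\mrm{H}^\bullet(A)\otimes_{\mrm{H}^0(A)}P_i$, concentrated in degrees $[\inf(A),0]$, and the horizontal direction contributes degrees in $[-d,0]$, so the total complex has cohomology supported in $[\inf(A)-d,0]$. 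This forces $\inf(M)\ge \inf(A)-d$, and combined with $\sup(M)=0$ it yields the uniform bound $\amp(M)\le \amp(A)+F$, witnessing bounded lifting.

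The main obstacle is the uniform control of $\inf(M)$ in the converse direction of part (2): a general lifting is unique only up to quasi-isomorphism and its infimum can a priori be arbitrarily negative, so the proof must commit to the specific lifting produced by $\tau_A$ and then exploit both the flatness of a projective resolution over $\mrm{H}^0(A)$ and the boundedness of $\mrm{H}^\bullet(A)$ to convert the uniform bound $d\le F$ into a uniform bound on $\amp(M)$.
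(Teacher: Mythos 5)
Your proposal is correct and follows essentially the same route as the paper: part (1) by plugging a bounded lifting into the defining formula for $\opn{FPD}(A)$ and using \cref{eqn:projdim}, and the converse of part (2) by taking the canonical lifting $A\otimes^{\mrm{L}}_{\mrm{H}^0(A)}\overline{M}$ coming from the section and bounding $\inf(M)\ge \inf(A)-\opn{FPD}(\mrm{H}^0(A))$. Your double-complex computation is just an expanded justification of the step the paper gets directly from $\flatdim_{\mrm{H}^0(A)}(\overline{M})\le \projdim_{\mrm{H}^0(A)}(\overline{M})$ and the boundedness of $\mrm{H}(A)$.
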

\begin{proof}
Suppose that $A$ has bounded lifting with a bound $N \in \mathbb{N}$. Let $n = \opn{FPD}(A)$,
and let $\overline{M} \in \opn{Mod}(\mrm{H}^0(A))$ be an $\mrm{H}^0(A)$-module with $\projdim_{\mrm{H}^0(A)}(\overline{M}) < \infty$.
By assumption, there exist a lifting  $M \in \cat{D}^{\mrm{b}}(A)$ such that $\mrm{H}^0(A)\otimes^{\mrm{L}}_A M \cong \overline{M}$ 
and $\inf(M) \ge -N$.
Moreover, by \cref{eqn:projdim},
we know that $\projdim_A(M) < \infty$.
Hence, by the definition of finitistic dimension over $A$, 
it follows that
\[
\projdim_A(M) \le \opn{FPD}(A) - \inf(M) \le n+N.
\]
Since $\projdim_A(M) = \projdim_{\mrm{H}^0(A)}(\overline{M})$,
we deduce that $\opn{FPD}(\mrm{H}^0(A)) \le n+N < \infty$.

Conversely, suppose that $\pi_A:A \to \mrm{H}^0(A)$ has a section. If $N = \opn{FPD}(\mrm{H}^0(A)) < \infty$,
then any $\mrm{H}^0(A)$-module $\overline{M}$ of finite projective dimension satisfies 
\[
\flatdim_{\mrm{H}^0(A)}(\overline{M}) \le \projdim_{\mrm{H}^0(A)}(\overline{M}) \le N,
\]
which implies that the lifting
$M = A\otimes^{\mrm{L}}_{\mrm{H}^0(A)} \overline{M}$
satisfies $\inf(M) \ge -N+\inf(A)$.
Since $A$ has bounded cohomology, $\inf(A) > -\infty$,
so the latter quantity is finite, showing that $A$ has bounded lifting.
\end{proof}

\begin{rem}
It is possible that the notion of a twisted complex,
as studied in \cite{GLV} can be used to obtain better control on the lifting of an $\mrm{H}^0(A)$-module of finite projective dimension to $\cat{D}^{\mrm{b}}(A)$.
\end{rem}

\section{Finitistic dimension of DG-rings with finite injective dimension}

In \cite[Proposition 4.3]{Bass},
Bass showed that if $A$ is a left noetherian ring with $\injdim_A(A) < \infty$ then 
\[
\opn{FPD}(A) \le \injdim_A(A) < \infty.
\]
The aim of this section is to generalize this result of Bass to the DG-setting. Such Gorenstein conditions on DG-rings were studied in detail in \cite{FJ}.

We need several preliminary results concerning injective dimension over DG-rings. Key references for this topic are \cite{Min,ShINJ}.
One important notion we will need from loc. cit, 
is the notion of a derived injective DG-module.
These, by definition, are left DG-modules $I$ such that either $I \cong 0$,
or $\inf(I) = \injdim_A(I) = 0$.
We denote by $\opn{Inj}(A)$ the full subcategory of $\cat{D}(A)$ consisting of derived injective DG-modules.

\begin{lem}\label{lem:injdimineqal}
Let $A$ be a DG-ring,
and let $M \in \cat{D}^{+}(A)$ be a non-zero DG-module.
Then $\injdim_A(M) \ge \inf(M)$.
\end{lem}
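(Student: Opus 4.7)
The plan is to produce a single test object $N \in \cat{D}^{\mrm{b}}(A)$ for which the nonvanishing of $\opn{Ext}^0_A(N,M)$ can be read off from $\mrm{H}^{\inf(M)}(M) \ne 0$, and then feed this into the definition of $\injdim_A(M)$.

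Set $s := \inf(M)$, which is a finite integer since $M \in \cat{D}^{+}(A)$ is nonzero. I would take $N$ to be the smart truncation $\tau^{\le s}(M)$, formed in the category of left DG-modules over $A$, so that $N$ is a DG-submodule of $M$ and the natural map $\iota \colon N \hookrightarrow M$ is a morphism of DG-modules. Because $\mrm{H}^i(M) = 0$ for $i < s$, while $\mrm{H}^i(N) = 0$ for $i > s$ and $\mrm{H}^s(N) = \mrm{H}^s(M)$, the cohomology of $N$ is concentrated in the single degree $s$. In particular $N \in \cat{D}^{\mrm{b}}(A)$, $\inf(N) = s$, and $\iota$ induces an isomorphism on $\mrm{H}^s$.

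Since $\mrm{H}^s(M) \ne 0$, the morphism $\iota$ is nonzero in $\cat{D}(A)$, so it represents a nonzero element of
\[
\opn{Hom}_{\cat{D}(A)}(N,M) \;=\; \opn{Ext}^0_A(N,M).
\]
By the definition of injective dimension, the existence of such $N$ with $\opn{Ext}^i_A(N,M) \ne 0$ at $i = 0$ forces any $n$ appearing in the defining infimum to satisfy $0 \le n - \inf(N) = n - s$, and hence $\injdim_A(M) \ge s = \inf(M)$. The argument is essentially a one-line application of the definition once the correct test object is in hand, so no real obstacle is expected; the only mildly nontrivial point — that $\iota$ is nonzero in $\cat{D}(A)$ — is immediate from its effect on $\mrm{H}^s$.
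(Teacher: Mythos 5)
Your proof is correct, but it uses a different test object than the paper. The paper takes $N = \mrm{H}^0(A)$ and invokes the standard identification $\opn{Ext}^{\inf(M)}_A(\mrm{H}^0(A),M) \cong \mrm{H}^{\inf(M)}(M) \ne 0$, which with $\inf(\mrm{H}^0(A))=0$ immediately forces $\injdim_A(M) \ge \inf(M)$; you instead take $N = \tau^{\le s}(M)$ with $s=\inf(M)$ and detect nonvanishing of $\opn{Ext}^0_A(N,M)$ via the inclusion, which is nonzero in $\cat{D}(A)$ because it is an isomorphism on $\mrm{H}^s$. Your route is slightly more elementary in that it avoids knowing the computation of the bottom Ext against $\mrm{H}^0(A)$, but it does quietly use that the smart truncation $\tau^{\le s}(M)$ is a DG-submodule of $M$ over $A$ --- this is true precisely because $A$ is non-positively graded (so $A^0$ acts on $\ker d^s$ and the action cannot raise degree), and it would be worth saying so explicitly. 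With that remark added, both arguments are one-line applications of the definition of injective dimension: yours pays with the truncation-as-DG-submodule check, the paper's pays with the standard bottom-cohomology formula, and each yields the same bound $\injdim_A(M) \ge \inf(M)$.
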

\begin{proof}
This follows from the definition of injective dimension and the fact that 
\[
\opn{Ext}^{\inf(M)}_A(\mrm{H}^0(A),M) = \mrm{H}^{\inf(M)}(M) \ne 0.
\]
\end{proof}

We now show that over left noetherian DG-rings,
direct sums do not increase injective dimension.

\begin{thm}\label{thm:dirSumInjDim}
Let $A$ be a left noetherian DG-ring. 
Let $\{M_{\alpha}\}_{\alpha \in I}$ be a collection of  DG-modules over $A$, 
and suppose that for all $\alpha \in I$ it holds that $\injdim_A(M_{\alpha}) \le n$.
Let $M = \oplus_{\alpha \in I} M_{\alpha}$,
and assume that $\inf(M) > -\infty$.
Then $\injdim_A(M) \le n$.
\end{thm}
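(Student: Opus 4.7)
The plan is to reduce the desired injective-dimension bound for $\bigoplus_\alpha M_\alpha$ to a statement about $\RHom_A(-,-)$ commuting with direct sums in the second variable, when the first argument is finitely generated. By \cite[Proposition 1.4]{DGFinite} combined with a shift, the condition $\injdim_A(X) \le n$ is equivalent to the vanishing of $\opn{Ext}^j_A(\overline{N}, X)$ for every $\overline{N} \in \opn{Mod}(\mrm{H}^0(A))$ (viewed as an $A$-DG-module in degree~$0$) and every $j > n$. Using the change-of-rings adjunction
\[
\RHom_A(\overline{N}, X) \simeq \RHom_{\mrm{H}^0(A)}\bigl(\overline{N}, \RHom_A(\mrm{H}^0(A), X)\bigr)
\]
together with the classical Bass--Papp fact that over a left noetherian ring injective dimension is detected by cyclic (hence finitely generated) modules, the vanishing above further reduces to the case of finitely generated $\overline{N}$.

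\textbf{The commutation step.}
For such a finitely generated $\overline{N}$, left-noetherianness of $A$ produces a semi-free resolution $P \to \overline{N}$ over $A$ that is bounded above in cohomological degree and finitely generated in each degree. The hypothesis $\inf(M) > -\infty$ supplies a uniform cohomological lower bound $k_0$ for the $M_\alpha$, so smart truncation replaces each $M_\alpha$ by a quasi-isomorphic DG-module concentrated in degrees $\ge k_0$. A degree-$m$ map from $P$ to $\bigoplus_\alpha M_\alpha$ is determined by its values $f(x_\lambda) \in (\bigoplus_\alpha M_\alpha)^{\deg x_\lambda + m}$ on the generators $x_\lambda$ of $P$, and since the $M_\alpha$ are concentrated in degrees $\ge k_0$ and the $x_\lambda$ have $\deg x_\lambda \le 0$, only those $x_\lambda$ with $k_0 - m \le \deg x_\lambda \le 0$ can contribute; by finite generation in each degree, this range contains only finitely many $\lambda$. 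Hence $\Hom_A(P, \bigoplus_\alpha M_\alpha)^m$ is a \emph{finite} product, and since finite products commute with direct sums,
\[
\RHom_A\bigl(\overline{N}, \textstyle\bigoplus_\alpha M_\alpha\bigr) \simeq \bigoplus_\alpha \RHom_A(\overline{N}, M_\alpha).
\]
Taking cohomology and applying the hypothesis $\injdim_A(M_\alpha) \le n$ yields $\opn{Ext}^j_A(\overline{N}, \bigoplus_\alpha M_\alpha) = 0$ for all $j > n$, which by the reduction above gives the desired bound on $\injdim_A(\bigoplus_\alpha M_\alpha)$.

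\textbf{Main obstacle.}
The principal technical point I expect to navigate is the construction of the semi-free resolution $P \to \overline{N}$ with $P$ finitely generated in each cohomological degree. Because the cohomology of $A$ need not be bounded below, $P$ will in general be unbounded below, but it can be built inductively provided each stage introduces only finitely many new generators per degree; this relies on the left-noetherian hypothesis in the form that each $\mrm{H}^i(A)$ is a finitely generated $\mrm{H}^0(A)$-module for $i < 0$. This pseudo-coherent resolution is the DG analogue of the classical construction of free resolutions by finitely generated free modules over an ordinary left noetherian ring, and should be available from the theory of noetherian DG-rings developed in \cite{Min, ShINJ}.
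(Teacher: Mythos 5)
Your argument is correct, but it takes a genuinely different route from the paper. The paper proceeds by induction on $n$: the base case is the derived Bass--Papp theorem \cite[Theorem 6.6]{ShINJ} (derived injectives are closed under coproducts over a left noetherian DG-ring), and the inductive step does dimension shifting by mapping each $M_\alpha$ to a derived injective $J_\alpha$ via \cite[Corollary 3.16]{Min} and passing to the cones; the commutation statement \cref{cor:homDirSum} is then \emph{deduced} from the theorem by the way-out lemma. You reverse this order: you first prove the commutation $\RHom_A(\overline{N},\bigoplus_\alpha M_\alpha)\cong\bigoplus_\alpha\RHom_A(\overline{N},M_\alpha)$ directly, using a pseudo-finite semi-free resolution of $\overline{N}$ together with the uniform lower bound coming from $\inf(M)>-\infty$, and then conclude the dimension bound after reducing the test objects to finitely generated $\mrm{H}^0(A)$-modules via the adjunction $\RHom_A(\overline{N},M)\simeq\RHom_{\mrm{H}^0(A)}(\overline{N},\RHom_A(\mrm{H}^0(A),M))$ and Baer's criterion over $\mrm{H}^0(A)$ (note this detection by cyclic modules is Baer's criterion, not Bass--Papp; for it you should observe that $\RHom_A(\mrm{H}^0(A),M)$ has bounded below cohomology, which indeed follows from $\inf(M)>-\infty$). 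The ingredient you flag as the main obstacle is available: over a left noetherian (cohomologically pseudo-noetherian) DG-ring, every DG-module with bounded above, degreewise finitely generated cohomology admits a semi-free resolution with finitely many generators in each degree; this is in \cite{YeBook}. What each approach buys: the paper's proof is short given the derived-injective machinery of \cite{Min,ShINJ}; yours avoids that machinery, and in passing proves the stronger fact that the commutation of \cref{cor:homDirSum} needs only $\inf(M)>-\infty$ and pseudo-coherence of the source, not the uniform bound on injective dimensions.
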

\begin{proof}
By shifting if necessary, 
we may assume without loss of generality that
\[
\inf(M) = \inf\{\inf(M_{\alpha})\mid \alpha \in I\} = 0.
\]
In particular, this implies by \cref{lem:injdimineqal} that $n \ge 0$.
We prove the result by induction on $n$.
If $n = 0$,
this implies by \cref{lem:injdimineqal} that for all $\alpha \in I$,
$\inf(M_{\alpha}) = \injdim_A(M_{\alpha}) = 0$,
so that $M_{\alpha} \in \opn{Inj}(A)$.
Thus, in this case, the statement is that $\opn{Inj}(A)$ is closed under direct sums, which is the derived Bass-Papp theorem \cite[Theorem 6.6]{ShINJ} (or \cite[Theorem 3.30]{Min}).
Suppose now that $n>0$.
For each $\alpha \in I$,
let us choose an element $J_{\alpha} \in \opn{Inj}(A)$ and a map $f_{\alpha}:M_{\alpha} \to J_{\alpha}$ in $\cat{D}(A)$ such that $\mrm{H}^0(f_{\alpha})$ is injective.
Such a map exists by \cite[Corollary 3.16]{Min}.
Note that if $\inf(M_{\alpha})>0$, 
we can simply take $J_{\alpha}=0 \in \opn{Inj}(A)$.
The map $f_{\alpha}$ induces a distinguished triangle
\[
M_{\alpha} \xrightarrow{f_{\alpha}} J_{\alpha} \to K_{\alpha} \to M_{\alpha}[1]
\]
in $\cat{D}(A)$. 
The exact sequence of $\mrm{H}^0(A)$-modules
\[
0 = \mrm{H}^{-1}(J_{\alpha}) \to \mrm{H}^{-1}(K_{\alpha}) \to \mrm{H}^0(M_{\alpha}) \xrightarrow{\mrm{H}^0(f_{\alpha})} \mrm{H}^0(N_{\alpha})
\]
and the fact that $\mrm{H}^0(f_{\alpha})$ is injective implies that $\inf(K_{\alpha}) \ge 0$.
Given any $T \in \cat{D}(A)$ with $\inf(T) = \sup(T) = 0$,
applying the functor $\mrm{R}\opn{Hom}_A(T,-)$ to the above distinguished triangle gives for any $i\in \mathbb{Z}$ the following exact sequence of abelian groups
\[
\opn{Ext}^i_A(T,J_{\alpha}) \to \opn{Ext}^i_A(T,K_{\alpha})
\to \opn{Ext}^{i+1}_A(T,M_{\alpha}) \to \opn{Ext}^{i+1}_A(T,J_{\alpha})
\]
If $i>0$, then the fact that $J_{\alpha} \in \opn{Inj}(A)$ implies by \cite[Theorem 4.10]{ShINJ} that
\[
\opn{Ext}^i_A(T,J_{\alpha}) = \opn{Ext}^{i+1}_A(T,J_{\alpha}) = 0.
\]
Hence, for all $i>0$ there is an isomorphism
\[
\opn{Ext}^i_A(T,K_{\alpha}) \cong \opn{Ext}^{i+1}_A(T,M_{\alpha}),
\]
which shows that $\injdim_A(K_{\alpha}) = \injdim_A(M_{\alpha}) - 1$.
We now wish to use the induction hypothesis by replacing $M_{\alpha}$ with $K_{\alpha}$.
Indeed, the above shows that for all $\alpha \in I$ it holds that $\injdim_A(K_{\alpha}) \le n-1$.
It may happen now that 
\[
\inf\{\inf(K_{\alpha})\mid \alpha \in I\} > 0,
\]
so as above, we shift $K_{\alpha}$ if needed so this number becomes zero. Shifting like this can only decrease injective dimension, so the induction hypothesis is indeed satisfied,
and it follows by induction that 
\[
\injdim_A(\oplus_{\alpha \in I} K_{\alpha}) \le n-1.
\]

According to \cite[tag 0CRG]{SP},
it holds that
\[
\oplus_{\alpha \in I} M_{\alpha} \xrightarrow{\oplus_{\alpha \in I} f_{\alpha}} \oplus_{\alpha \in I} J_{\alpha} \to \oplus_{\alpha \in I} K_{\alpha} \to \oplus_{\alpha \in I} M_{\alpha}[1]
\]
is a distinguished triangle in $\cat{D}(A)$. 
Applying the derived Bass-Papp theorem \cite[Theorem 6.6]{ShINJ} again, we have that 
$\oplus_{\alpha \in I} J_{\alpha} \in \opn{Inj}(A)$.
Arguing again as above, 
this distinguished triangle implies that
\[
\injdim_A(\oplus K_{\alpha}) = \injdim_A(\oplus M_{\alpha}) - 1,
\]
which shows that $\injdim_A(M) \le n$ as claimed.
\end{proof}

\begin{cor}\label{cor:homDirSum}
In the situation of \cref{thm:dirSumInjDim},
for any $N \in \cat{D}_{\mrm{f}}(A)$ the natural map
\[
\bigoplus_{\alpha \in I}\left(\mrm{R}\opn{Hom}_A(N,M_{\alpha})\right) \to
\mrm{R}\opn{Hom}_A(N,\bigoplus_{\alpha \in I} M_{\alpha})
\]
is an isomorphism.
\end{cor}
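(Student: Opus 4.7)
My plan is to approximate $N$ by compact (perfect) DG-modules, for which $\mrm{R}\opn{Hom}_A$ commutes with arbitrary direct sums by definition of compactness. It suffices to check that the displayed map induces an isomorphism on $\opn{Ext}^i_A(N,-)$ for each $i \in \mathbb{Z}$.

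First I reduce to the case where $N$ is bounded above. By \cref{thm:dirSumInjDim}, $\injdim_A(\bigoplus_\alpha M_\alpha) \le n$, and by hypothesis $\injdim_A(M_\alpha) \le n$ for every $\alpha$. Hence for any $X$ in the collection $\{M_\alpha\}\cup\{\bigoplus_\alpha M_\alpha\}$ and any $T$ with $\inf(T)>-\infty$, one has $\opn{Ext}^j_A(T,X) = 0$ when $j > n - \inf(T)$. Applying $\mrm{R}\opn{Hom}_A(-,X)$ to the distinguished triangle $\tau^{<S}(N) \to N \to \tau^{\ge S}(N)$ for $S$ large (depending on $i$), the resulting long exact sequence gives $\opn{Ext}^i_A(N,X) \cong \opn{Ext}^i_A(\tau^{<S}(N),X)$, so I may replace $N$ by $\tau^{<S}(N)$ and henceforth assume $N \in \cat{D}^-_{\mrm{f}}(A)$.

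Next I build the perfect approximation. Since $A$ is left noetherian and $N$ has bounded-above, finitely generated cohomology, $N$ admits a semi-free resolution $P \to N$ with finitely many free generators in each cohomological degree. The successive finite stages of the semi-free filtration form a directed system of perfect DG-modules $P_m \to N$ ($m \in \mathbb{N}$) satisfying $\sup(\opn{cone}(P_m \to N)) \to -\infty$ as $m \to \infty$. Set $t = \inf(\bigoplus_\alpha M_\alpha)$, which is finite by hypothesis. The elementary bound $\opn{Ext}^j_A(Y,X) = 0$ whenever $j < \inf(X) - \sup(Y)$, applied to $Y = \opn{cone}(P_m \to N)$, shows that for each fixed $i$ and all sufficiently large $m$, $\opn{Ext}^i_A(\opn{cone}(P_m \to N), X) = 0 = \opn{Ext}^{i+1}_A(\opn{cone}(P_m \to N), X)$ for every $X$ with $\inf(X) \ge t$, in particular for $X = M_\alpha$ and $X = \bigoplus_\alpha M_\alpha$. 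The long exact sequence attached to the triangle $P_m \to N \to \opn{cone}(P_m \to N)$ therefore yields isomorphisms $\opn{Ext}^i_A(N, M_\alpha) \cong \opn{Ext}^i_A(P_m, M_\alpha)$ (for every $\alpha$) and $\opn{Ext}^i_A(N, \bigoplus_\alpha M_\alpha) \cong \opn{Ext}^i_A(P_m, \bigoplus_\alpha M_\alpha)$.

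Finally, compactness of $P_m$ in $\cat{D}(A)$ means that $\mrm{R}\opn{Hom}_A(P_m,-)$ commutes with arbitrary direct sums, giving $\opn{Ext}^i_A(P_m, \bigoplus_\alpha M_\alpha) \cong \bigoplus_\alpha \opn{Ext}^i_A(P_m, M_\alpha)$. Chaining with the isos of the previous paragraph produces the desired iso $\opn{Ext}^i_A(N, \bigoplus_\alpha M_\alpha) \cong \bigoplus_\alpha \opn{Ext}^i_A(N, M_\alpha)$ in every degree $i$, proving the map is a quasi-isomorphism. The main technical point is constructing the perfect filtration $\{P_m\}$ with cones going to $-\infty$, which rests on the existence of pseudo-finite semi-free resolutions over left noetherian DG-rings; once that is in place, both reductions are pure degree counts.
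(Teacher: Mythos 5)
Your argument is correct, but it takes a different route from the paper. The paper's proof is a two-line appeal to machinery: it packages the two sides as functors $F(-)=\bigoplus_{\alpha}\mrm{R}\opn{Hom}_A(-,M_\alpha)$ and $G(-)=\mrm{R}\opn{Hom}_A(-,\bigoplus_\alpha M_\alpha)$, observes that the natural transformation is an isomorphism on the object $A$ itself, notes that both functors have finite cohomological dimension by \cref{thm:dirSumInjDim}, and then cites the lemma on way-out functors. What you do is, in effect, unroll that lemma by hand: truncate $N$ above using the uniform injective dimension bound, take a pseudo-finite semi-free resolution (available since $A$ is left noetherian and $N$ has finitely generated cohomology), approximate by the finite semi-free stages $P_m$ (these are indeed DG-submodules because $A$ is non-positively graded, hence perfect and compact), and finish with the elementary vanishing $\opn{Ext}^j_A(Y,X)=0$ for $j<\inf(X)-\sup(Y)$ plus compactness. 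Your version is longer but self-contained, and it makes two things visible that the citation hides: the isomorphism in each fixed degree only needs $\inf(M_\alpha)\ge\inf(\bigoplus_\alpha M_\alpha)>-\infty$ uniformly in $\alpha$ once $N$ is bounded above, so \cref{thm:dirSumInjDim} is really only used to handle unbounded-above $N$ via the truncation step; and the uniformity in $\alpha$ of the chosen stage $m$ is explicit, which is what makes the left-hand vertical map in the comparison square an isomorphism.

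One small loose end in your first reduction: the vanishing $\opn{Ext}^j_A(\tau^{\ge S}N,X)=0$ for $j>n-S$ is not literally an instance of the paper's definition of injective dimension, since $\tau^{\ge S}N$ may be unbounded above while the definition tests only against $\cat{D}^{\mrm{b}}(A)$. This is easily repaired: write $\tau^{\ge S}N$ as the homotopy colimit of its truncations $\tau^{\le m}\tau^{\ge S}N$ and use the resulting Milnor sequence, which gives the vanishing for $j>n-S+1$; since you are free to choose $S$ as large as you like for each fixed $i$, this costs nothing. With that word added, your proof is complete.
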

\begin{proof}
Considering the two functors
\[
F(-):= \bigoplus_{\alpha \in I}\left(\mrm{R}\opn{Hom}_A(-,M_{\alpha})\right)
\]
and
\[
G(-):= \mrm{R}\opn{Hom}_A(-,\bigoplus_{\alpha \in I} M_{\alpha})
\]
there is a natural map $\eta:F \to G$, 
and it is clear that $\eta_A:F(A) \to G(A)$ is an isomorphism.
By \cref{thm:dirSumInjDim},
the functors $F$ and $G$ both have finite cohomological dimension.
Hence, the result follows from the lemma on way-out functors \cite[Theorem 2.11]{YeDual}.
\end{proof}

\begin{cor}\label{cor:injdimFree}
Let $A$ be a left noetherian DG-ring with $\injdim_A(A) < \infty$.
Then for any set $I$, 
it holds that
\[
\injdim_A\left(\bigoplus_{\alpha \in I} A\right) = \injdim_A(A).
\]
\end{cor}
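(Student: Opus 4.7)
The plan is to establish the equality as two matching inequalities, each following almost immediately from results already in hand.

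For the upper bound $\injdim_A(\bigoplus_{\alpha\in I} A) \le \injdim_A(A)$, I would apply \cref{thm:dirSumInjDim} directly to the constant family $M_\alpha = A$ for all $\alpha \in I$. The uniform bound $\injdim_A(M_\alpha) \le \injdim_A(A)$ is given by hypothesis, and the required condition $\inf(\bigoplus_\alpha A) = \inf(A) > -\infty$ is automatic in the running setting (with $\injdim_A(A) < \infty$ and \cref{lem:injdimineqal} ensuring $A$ has bounded-below cohomology). The theorem then produces the desired bound at once.

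For the reverse inequality, assume $I$ is nonempty (otherwise $\bigoplus_\alpha A = 0$ and the statement is vacuous under the natural conventions), fix $\alpha_0 \in I$, and note that $A$ is a direct summand of $\bigoplus_{\alpha \in I} A$ in $\cat{D}(A)$ via the $\alpha_0$-coordinate inclusion split by the $\alpha_0$-projection. Since the functor $\mrm{R}\opn{Hom}_A(N,-)$ respects this split decomposition, $\opn{Ext}^i_A(N,A)$ is a direct summand of $\opn{Ext}^i_A(N, \bigoplus_\alpha A)$ for every $N \in \cat{D}^{\mrm{b}}(A)$ and every $i$. Consequently any $n$ for which the vanishing in the definition of $\injdim_A(\bigoplus_\alpha A)$ holds also works for $A$, and we conclude $\injdim_A(A) \le \injdim_A(\bigoplus_\alpha A)$.

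The two inequalities combine to give equality. The substantive content has already been absorbed into \cref{thm:dirSumInjDim}, while the reverse direction is a purely formal direct-summand manipulation, so I do not anticipate a genuine obstacle here.
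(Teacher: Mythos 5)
Your proof is correct, but the second half takes a genuinely different route from the paper. The upper bound is the same: both you and the paper apply \cref{thm:dirSumInjDim} to the constant family. For the lower bound, the paper does not use a retract argument; it invokes a result from the DG finitistic dimension paper producing a cyclic module $\mrm{H}^0(A)/\overline{J}$ with $\opn{Ext}^n_A(\mrm{H}^0(A)/\overline{J},A)\ne 0$ for $n=\injdim_A(A)$, and then uses \cref{cor:homDirSum} to see that this non-vanishing survives in $\opn{Ext}^n_A(\mrm{H}^0(A)/\overline{J},\bigoplus_\alpha A)$. Your argument instead observes that $A$ is a retract of $\bigoplus_{\alpha\in I}A$ in $\cat{D}(A)$, so $\opn{Ext}^i_A(N,A)$ is a direct summand of $\opn{Ext}^i_A(N,\bigoplus_\alpha A)$ for every $N\in\cat{D}^{\mrm{b}}(A)$, whence $\injdim_A(A)\le\injdim_A(\bigoplus_\alpha A)$. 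This is more elementary: it needs no noetherian hypothesis and bypasses both the cited lemma and \cref{cor:homDirSum}, whereas the paper's route exhibits a concrete cyclic witness for the non-vanishing (in the spirit of how \cref{cor:homDirSum} is used elsewhere). One small caveat: your parenthetical justification that $\inf(A)>-\infty$ follows from \cref{lem:injdimineqal} is not quite right, since that lemma already assumes $M\in\cat{D}^{+}(A)$; the boundedness is really built into the paper's conventions (injective dimension is defined for objects of $\cat{D}^{\mrm{b}}(A)$), so the hypothesis $\injdim_A(A)<\infty$ presupposes it, and the paper makes the same implicit assumption. This does not affect the validity of your argument.
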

\begin{proof}
Set $n = \injdim_A(A)$.
Let $F = \bigoplus_{\alpha \in I} A$.
By \cref{thm:dirSumInjDim}, we know that $\injdim_A(F)\le \injdim_A(A)$.
By \cite[Lemma 2.4]{DGFinite}, 
there exist a left ideal $\overline{J} \subseteq \mrm{H}^0(A)$
such that $\opn{Ext}^n_A(\mrm{H}^0(A)/\overline{J},A) \ne 0$.
Hence, \cref{cor:homDirSum} implies that 
$\opn{Ext}^n_A(\mrm{H}^0(A)/\overline{J},F) \ne 0$,
which shows that $\injdim_A(F) = n$.
\end{proof}

Here is the main result of this section.

\begin{thm}\label{thm:injDimFFD}
Let $A$ be a left noetherian DG-ring with bounded cohomology.
Then $\opn{FPD}(A) \le \injdim_A(A)$.
In particular, if $\injdim_A(A) < \infty$
then $\opn{FPD}(A) < \infty$.
\end{thm}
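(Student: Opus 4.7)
The plan is to mimic Bass's classical proof \cite[Proposition 4.3]{Bass}, translated to the DG-setting. Classically, if $\projdim_A(M) = d$ one extracts a non-vanishing $\opn{Ext}^d_A(M, F)$ with $F$ free, and the equality $\injdim_A(F) = \injdim_A(A)$ then forces $d \le \injdim_A(A)$. In the DG-world, \cref{cor:injdimFree} provides the analogue $\injdim_A(A^{(J)}) = \injdim_A(A)$ for any set $J$, so the real work lies in manufacturing a non-vanishing Ext into a free DG-module $A^{(J)}$.

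Let $M \in \cat{D}^{\mrm{b}}(A)$ with $d := \projdim_A(M) < \infty$ and set $n := \injdim_A(A)$. Since $\projdim_A(M) + \inf(M)$ is invariant under shifts, I may assume $\inf(M) = 0$ and prove $d \le n$. By the defining infimum together with \cite[Proposition 1.4]{DGFinite}, which permits testing on amplitude-$0$ DG-modules, there exists some $N \in \cat{D}(A)$ with $\amp(N) = 0$ and $\opn{Ext}^{d + \sup(N)}_A(M, N) \ne 0$. After shifting $N$ so that $\sup(N) = \inf(N) = 0$, $N$ becomes quasi-isomorphic in $\cat{D}(A)$ to an $\mrm{H}^0(A)$-module $\overline{N}$ placed in degree $0$, and $\opn{Ext}^d_A(M, \overline{N}) \ne 0$.

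Choose a surjection $\mrm{H}^0(A)^{(J)} \surj \overline{N}$ of $\mrm{H}^0(A)$-modules with kernel $\overline{K}$. The associated distinguished triangle in $\cat{D}(A)$, together with the vanishing $\opn{Ext}^{d+1}_A(M, \overline{K}) = 0$ (which follows from $\sup(\overline{K}) = 0$ and $\projdim_A(M) \le d$), yields $\opn{Ext}^d_A(M, \mrm{H}^0(A)^{(J)}) \ne 0$ through the long exact sequence for $\RHom_A(M, -)$. The central step is to transfer this non-vanishing from the $\mrm{H}^0(A)$-module $\mrm{H}^0(A)^{(J)}$ to the genuine free DG-$A$-module $A^{(J)}$. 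For this I apply $\RHom_A(M, -)$ to the direct sum of truncation triangles
\[
(\tau^{\le -1}A)^{(J)} \to A^{(J)} \to \mrm{H}^0(A)^{(J)} \to (\tau^{\le -1}A)^{(J)}[1].
\]
Since $\sup\bigl((\tau^{\le -1}A)^{(J)}\bigr) \le -1$, the bound $\projdim_A(M) = d$ forces $\opn{Ext}^i_A(M, (\tau^{\le -1}A)^{(J)}) = 0$ for $i \in \{d, d+1\}$, and the long exact sequence produces an isomorphism $\opn{Ext}^d_A(M, A^{(J)}) \cong \opn{Ext}^d_A(M, \mrm{H}^0(A)^{(J)}) \ne 0$.

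Finally, \cref{cor:injdimFree} gives $\injdim_A(A^{(J)}) = n$, so $\opn{Ext}^i_A(M, A^{(J)}) = 0$ for every $i > n - \inf(M) = n$; combined with the non-vanishing at $i = d$ this yields $d \le n$, as required. The most delicate step is producing the $\mrm{H}^0(A)$-module witness $\overline{N}$: one needs the amplitude-$0$ reduction of \cite[Proposition 1.4]{DGFinite} to know that the Ext non-vanishing witnessing $\projdim_A(M) = d$ can in fact be detected on an $\mrm{H}^0(A)$-module, which is what makes it available for the subsequent free presentation and truncation-triangle manipulations. Once this is in place, the rest is a routine Ext chase combined with the pre-established \cref{cor:injdimFree}.
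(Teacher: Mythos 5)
Your proof is correct and is essentially the paper's argument: both reduce, after shifting, to exhibiting $\opn{Ext}^d_A(M,F)\ne 0$ for a free DG-module $F=\bigoplus_{J} A$ (using \cite[Proposition 1.4]{DGFinite} to get an amplitude-zero test module) and then conclude via \cref{cor:injdimFree}. The only difference is that where the paper invokes \cite[Lemma 2.8]{Min} to get a single map $F \to N$ surjective on $\mrm{H}^0$ and runs one long exact sequence, you build that map by hand from a free presentation of $\mrm{H}^0(N)$ over $\mrm{H}^0(A)$ together with the truncation triangle $(\tau^{\le -1}A)^{(J)} \to A^{(J)} \to \mrm{H}^0(A)^{(J)}$, which amounts to the same thing (and, as the paper notes explicitly while you leave implicit, the case $\injdim_A(A)=\infty$ is trivial).
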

\begin{proof}
There is nothing to prove if $\injdim_A(A) = \infty$,
so assume it is finite.
Let $M \in \cat{D}^{\mrm{b}}(A)$ be a non-zero DG-module such that $\projdim_A(M) < \infty$.
By shifting, we may assume that $\inf(M) = 0$,
and it is thus enough to show that $\injdim_A(A) \ge \projdim_A(M)$.
Assume $n = \projdim_A(M)$,
and let $N \in \cat{D}(A)$ be such that $\inf(N) = \sup(N) = 0$, and moreover $\opn{Ext}^n_A(M,N) \ne 0$.
Such a DG-module exists by \cite[Proposition 1.4]{DGFinite}.
Using \cite[Lemma 2.8]{Min},
we may find an index set $I$ such that for 
$F = \oplus_{\alpha \in I} A$,
there is a map $\varphi:F \to N$ such that $\mrm{H}^0(\varphi)$ is surjective. 
We embed this map in a distinguished triangle  
\[
K \to F \xrightarrow{\varphi} N \to K[1]
\]
in $\cat{D}(A)$.
Then it follows from surjectivity of $\mrm{H}^0(\varphi)$ that $\sup(K) \le 0$, so our assumption on the projective dimension of $M$ implies that $\opn{Ext}^{n+1}_A(M,K) = 0$.
Applying the functor $\mrm{R}\opn{Hom}_A(M,-)$ to the above distinguished triangle gives the exact sequence
\[
\opn{Ext}^n_A(M,F) \to \opn{Ext}^n_A(M,N) \to \opn{Ext}^{n+1}_A(M,K) = 0.
\]
Hence, the fact that $\opn{Ext}^n_A(M,N) \ne 0$
implies that $\opn{Ext}^n_A(M,F) \ne 0$.
This, and the definition of injective dimension implies that
\[
\injdim_A(F) \ge n-\inf(M) = n.
\]
By \cref{cor:injdimFree} this implies that $\injdim_A(A) \ge n$, establishing the result.
\end{proof}

\section{Trivial extension DG-rings and dualizing complexes}

The aim of this section is to generalize one of the main results of \cite{Jor} to a noncommutative setting.
J{\o}rgensen's result we seek to generalize says that if $A$ is a commutative noetherian local ring,
and if $R$ is a dualizing complex over $A$ with $\sup(R) \le 0$, then the trivial extension DG-ring $A \skewtimes R$ is a Gorenstein DG-ring. Since one may consider $A$ as a quotient of $A \skewtimes R$, this result says that any commutative noetherian local ring with a dualizing complex is a quotient of a Gorenstein DG-ring. More generally, Kawasaki, solving a conjecture of Sharp, showed in \cite[Corollary 1.4]{Kaw} that any such ring is a quotient of an ordinary Gorenstein local ring.

We prove in this section a noncommutative version of this result. 
First, we define the notion of a trivial extension DG-ring in the noncommutative setting.

\begin{dfn}
Let $A$ be a ring, and let $M$ be a complex of bimodules over $A$ with the property that $\sup(M) \le 0$.
The trivial extension DG-ring $A \skewtimes M$ is defined as follows. 
As a graded abelian group, we let
$A \skewtimes M = A \oplus M$.
The multiplication is defined by
\[
\begin{bmatrix}
a_1 \\
m_1
\end{bmatrix}
\cdot 
\begin{bmatrix}
a_2 \\
m_2
\end{bmatrix} 
=
\begin{bmatrix}
a_1 \cdot a_2\\
a_1 \cdot m_2 + m_1\cdot a_2
\end{bmatrix}
\]
The differential of $A \skewtimes M$ is the differential of $M$. 
The differential on $A$ is $0$.
\end{dfn}

It follows from the definitions that $A \skewtimes M$ is a non-positive DG-ring. If we further assume that $\sup(M) < 0$, then it holds that $\mrm{H}^0(A \skewtimes M) = A$.
The trivial extension DG-ring comes equipped with a natural map of DG-rings $\tau_{A,M}:A \to A \skewtimes M$ given by
\[
\tau_{A,M}: A \to A \skewtimes M \quad a \mapsto \begin{bmatrix} a \\ 0 \end{bmatrix}
\]
When $\sup(M) < 0$, it follows that the natural map
\[
\pi_{A \skewtimes M}:A\skewtimes M\to \mrm{H}^0(A \skewtimes M) = A
\]
satisfies $\pi_{A \skewtimes M} \circ \tau_{A,M} = 1_A$.
In other words, $\pi_{A \skewtimes M}$ has a section.

Restriction along the map $\tau_{A,M}$ allows one to view $A \skewtimes M$ as a complex of $A$-modules. 
We observe that as such, it is equal to the complex $A \oplus M$.

Next, we wish to study trivial extensions of rings by dualizing complexes. To do this, we recall this important notion.

The notion of a dualizing complex was first introduced by Grothendieck (with the details spelled out by Hartshorne) in an algebraic geometry context \cite{RD} in order to prove a duality theorem which was valid for not necessarily Cohen-Macaulay schemes.
In noncommutative contexts, this notion was first introduced by Yekutieli in \cite{Yek}. This is the notion we need in this paper, so we now recall the definition.

Let $A$ be a ring.
Given an $A-A$-bimodule $M$,
or more generally,
a complex of $A-A$-bimodules $M$,
we denote by $\opn{Res}_A(M)$ the restriction functor which forgets the right $A$-structure. Thus, $\opn{Res}_A(M) \in \cat{D}(A)$. Similarly, forgetting the left $A$-structure,
we let $\opn{Res}_{A^{\op}}(M) \in \cat{D}(A^{\op})$.

We call a ring $A$ noetherian if it is both left noetherian and right noetherian.

\begin{dfn}
Let $A$ be a noetherian ring.
A dualizing complex $R$ over $A$ is a complex of $R$ of $A-A$-bimodules which satisfies the following properties:
\begin{enumerate}
\item The complexes $\opn{Res}_A(R)$ and $\opn{Res}_{A^{\op}}(R)$ are bounded complexes of injective $A$-modules and injective $A^{\op}$-modules respectively.
\item The complexes $\opn{Res}_A(R)$ and $\opn{Res}_{A^{\op}}(R)$ have finitely generated cohomology over $A$ and $A^{\op}$ respectively.
\item The natural maps
\[
A \to \mrm{R}\opn{Hom}_A(R,R), \quad A \to \mrm{R}\opn{Hom}_{A^{\op}}(R,R)
\]
are isomorphisms.
\end{enumerate}
\end{dfn}

This definition is a direct generalization of the commutative case.

\begin{rem}
The above definition is slightly different than the definition given in \cite{Yek}.
The main difference is that in loc. cit,
one only assumes that  $\opn{Res}_A(R)$ and $\opn{Res}_{A^{\op}}(R)$ have finite injective dimension.
However, in that context, $A$ is an algebra over a field,
so by \cite[Proposition 2.4]{Yek},
both definitions are equivalent.
\end{rem}

\begin{exa}
Let $\K$ be a field,
and let $A$ be a finite dimensional $\K$-algebra.
Then $DA = \opn{Hom}_{\K}(A,\K)$ is a dualizing complex over $A$.
\end{exa}

There are many other examples of rings which admit dualizing complexes. We refer the reader to \cite{vdb,Yek,YZ,YZ2} for various  existence results.

We now fulfill the goal of this section,
and generalize to the noncommutative setting one direction of \cite[Theorem 2.2]{Jor}. Our proof is based on the commutative case.

\begin{thm}\label{thm:trivialExt}
Let $A$ be a noetherian ring,
let $R$ be a dualizing complex over $A$,
and suppose that $\sup(R) < 0$.
Letting $B = A \skewtimes R$ be the trivial extension DG-ring,
it holds that $\injdim_B(B) < \infty$
and $\injdim_{B^{\op}}(B^{\op}) < \infty$. 
\end{thm}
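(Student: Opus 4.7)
The plan is to establish an isomorphism $B \simeq \opn{RHom}_A(B,R)$ in $\cat{D}(B)$, and then to deduce $\injdim_B(B) \le \injdim_A(R) < \infty$ by transferring the finite injective dimension of $R$ through the right adjoint of restriction. The second statement $\injdim_{B^{\op}}(B^{\op}) < \infty$ follows from an entirely symmetric argument working with the right $A$-module structure throughout, since the definition of dualizing complex is symmetric in $A$ and $A^{\op}$.

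For the transfer of finite injective dimension, I would consider the section $\tau_A \colon A \to B$, the restriction functor $\tau_A^*\colon \cat{D}(B) \to \cat{D}(A)$, and its right adjoint $F = \opn{RHom}_A(B,-)\colon \cat{D}(A) \to \cat{D}(B)$. Since $\tau_A^*$ is exact and preserves cohomology (so that $\inf(\tau_A^* N) = \inf(N)$ for every $N \in \cat{D}(B)$), the adjunction isomorphism
\[
\opn{Ext}^i_B\bigl(N, F(M)\bigr) \;\cong\; \opn{Ext}^i_A\bigl(\tau_A^* N, M\bigr)
\]
immediately shows that $\injdim_B(F(M)) \le \injdim_A(M)$ for every $M \in \cat{D}^{\mrm{b}}(A)$. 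Applied to $M = R$, this gives $\injdim_B(F(R)) < \infty$.

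The main step is therefore to prove $F(R) \simeq B$ in $\cat{D}(B)$. Regarded as a left $A$-module via $\tau_A$, the DG-module $B$ splits as $A \oplus R$, and hence
\[
F(R) \;\simeq\; \opn{RHom}_A(A,R) \,\oplus\, \opn{RHom}_A(R,R) \;\simeq\; R \oplus A
\]
in $\cat{D}(A)$, where the last isomorphism uses the defining dualizing property $\opn{RHom}_A(R,R) \simeq A$. This is $B$ up to swapping the summands. To lift this to a $B$-linear isomorphism, I would apply the adjunction to the canonical projection $p\colon B = A \oplus R \to R$ in $\cat{D}(A)$, obtaining a morphism $g\colon B \to F(R)$ in $\cat{D}(B)$; a summand-by-summand computation identifies $\tau_A^*(g)$ with the swap isomorphism constructed above, so that $g$ is an isomorphism in $\cat{D}(B)$ by conservativity of $\tau_A^*$. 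The main obstacle I anticipate is precisely this explicit verification in the noncommutative setting, since the dualizing isomorphism $A \simeq \opn{RHom}_A(R,R)$ is implemented via right multiplication rather than left, and the bimodule conventions for $R$ must be threaded consistently through the trivial extension structure; once this computation is in hand, conservativity of $\tau_A^*$ delivers the required isomorphism, and combining with the previous paragraph yields the theorem.
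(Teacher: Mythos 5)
Your proposal is correct and follows essentially the same route as the paper: the morphism $g$ you obtain by adjunction from the projection $B = A \oplus R \to R$ is precisely the paper's map $\Psi(b) = b\cdot\epsilon$ into $\opn{Hom}_A(B,\opn{Res}_A(R))$, which the paper likewise shows restricts over $A$ to the ``swap'' given by $R \cong \opn{Hom}_A(A,R)$ and the right-multiplication quasi-isomorphism $A \to \opn{Hom}_A(R,R)$, concluding by conservativity of restriction. The transfer of finite injective dimension via the adjunction $\mrm{R}\opn{Hom}_B(-,\mrm{R}\opn{Hom}_A(B,R)) \cong \mrm{R}\opn{Hom}_A(-,\opn{Res}_A(R))$ is also exactly the paper's argument.
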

\begin{proof}
By symmetry, it is enough to show that $\injdim_B(B) < \infty$.
Let 
\[
N := \mrm{R}\opn{Hom}_A(B,\opn{Res}_A(R)) = \opn{Hom}_A(B,\opn{Res}_A(R)) \in \cat{D}(B)
\]
where the equality follows from the fact that $\opn{Res}_A(R)$ is a bounded complex of injective $A$-modules.
By definition, the $B$-module structure of $N$ is given by
\[
(b\cdot n) (c) = (-1)^{|b|(|n|+|c|)}\cdot n(c\cdot b)
\]
for homogeneous elements $b \in B$, $n \in N$ and $c \in B$.
The adjunction isomorphism
\[
\mrm{R}\opn{Hom}_B(-,N) = \mrm{R}\opn{Hom}_B(-,\mrm{R}\opn{Hom}_A(B,\opn{Res}_A(R))) \cong
\mrm{R}\opn{Hom}_A(-,\opn{Res}_A(R))
\]
and the fact that $\injdim_A(\opn{Res}_A(R)) < \infty$ implies that $\injdim_B(N) < \infty$.
To complete the proof, we will show that there is an isomorphism $B \cong N$ in $\cat{D}(B)$.
There is an element $\epsilon \in N$ given by
\[
\epsilon 
\begin{bmatrix}
a \\
r
\end{bmatrix}
= r.
\]
Using this element, we define a morphism
\[
\Psi: B \to N, \quad  \Psi(b) = b\cdot \epsilon.
\]
It is clear that $\Psi$ is a morphism in $\cat{D}(B)$.
The proof will be complete once we show that $\Psi$ is an isomorphism. 
To do that, we may apply the forgetful functor $\cat{D}(B) \to \cat{D}(A)$, 
and it is enough to show that as a morphism in $\cat{D}(A)$,
the map $\Psi$ is an isomorphism.
Working over $A$, we may write $B = A \oplus \opn{Res}_A(R)$,
and hence,
\[
N = \opn{Hom}_A(B,\opn{Res}_A(R)) = \opn{Hom}_A(A,\opn{Res}_A(R)) \oplus \opn{Hom}_A(\opn{Res}_A(R),\opn{Res}_A(R)).
\]
Using this decomposition, we compute
\begin{equation}\label{eq:comp1}
\left(
\Psi 
\begin{bmatrix}
a\\
0
\end{bmatrix}
\right)
\left(
\begin{bmatrix}
a' \\
r'
\end{bmatrix}
\right) 
=
\left(
\begin{bmatrix}
a\\
0
\end{bmatrix}
\cdot \epsilon
\right)
\left(
\begin{bmatrix}
a' \\
r'
\end{bmatrix}
\right) =
\epsilon
\left(
\begin{bmatrix}
a' \\
r'
\end{bmatrix}
\cdot 
\begin{bmatrix}
a\\
0
\end{bmatrix}
\right)
=
\epsilon
\begin{bmatrix}
a'\cdot a\\
r'\cdot a
\end{bmatrix}
= r'\cdot a
\end{equation}
and
\begin{equation}\label{eq:comp2}
\left(
\Psi 
\begin{bmatrix}
0\\
r
\end{bmatrix}
\right)
\left(
\begin{bmatrix}
a' \\
r'
\end{bmatrix}
\right) 
=
\left(
\begin{bmatrix}
0\\
r
\end{bmatrix}
\cdot \epsilon
\right)
\left(
\begin{bmatrix}
a' \\
r'
\end{bmatrix}
\right) =
\epsilon
\left(
\begin{bmatrix}
a' \\
r'
\end{bmatrix}
\cdot 
\begin{bmatrix}
0\\
r
\end{bmatrix}
\right)
=
\epsilon
\begin{bmatrix}
0\\
a' \cdot r
\end{bmatrix} =
a' \cdot r.
\end{equation}
It is clear that the map
\[
\Phi_1:\opn{Res}_A(R) \to \opn{Hom}_A(A,\opn{Res}_A(R)),
\quad
\left(\Phi_1(r)\right)(a) = a\cdot r
\]
is an isomorphism.
The fact that $R$ is a dualizing complex over $A$ and that $\opn{Res}_A(R)$ is a bounded complex of injective $A$-modules, implies that the map
\[
\Phi_2:A \to \opn{Hom}_A(\opn{Res}_A(R),\opn{Res}_A(R)),
\quad
\left(\Phi_2(a)\right)(r) = r \cdot a
\]
is bijective in cohomology.
The computations \cref{eq:comp1} and \cref{eq:comp2} show that using the decomposition
\[
\Psi: \opn{Res}_A(R) \oplus A \to
\opn{Hom}_A(A,\opn{Res}_A(R)) \oplus 
\opn{Hom}_A(\opn{Res}_A(R),\opn{Res}_A(R)) 
\]
we may write
\[
\Psi = \begin{bmatrix}
\Phi_1 & 0\\
0 & \Phi_2
\end{bmatrix}
\]
Hence, the fact that $\mrm{H}(\Phi_1)$ and $\mrm{H}(\Phi_2)$ are both bijective implies that $\mrm{H}(\Psi)$ is also bijective, showing that $\Psi$ is an isomorphism.
Hence, $\injdim_B(B) < \infty$.
\end{proof}

\begin{cor}\label{cor:existDGFFD}
Let $A$ be a noetherian ring which has a dualizing complex.
Then there exists a noetherian DG-ring $B$ with bounded cohomology such that the following hold:
\begin{enumerate}
\item There is an equality $\mrm{H}^0(B) = A$.
\item The natural map $\pi_B:B \to \mrm{H}^0(B)$ has a section.
\item It holds that $\opn{FPD}(B) < \infty$.
\end{enumerate}
\end{cor}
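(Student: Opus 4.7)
The plan is to take the trivial extension DG-ring $B = A \skewtimes R'$, where $R'$ is a suitably shifted dualizing complex over $A$, and then verify the three required properties by combining the construction with \cref{thm:trivialExt} and \cref{thm:injDimFFD}.

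More precisely, let $R$ be a dualizing complex over $A$, which exists by hypothesis. Since for any integer $n$ the shift $R[n]$ is again a dualizing complex, I choose $n$ large enough so that $R' := R[n]$ satisfies $\sup(R') < 0$ (any $n > \sup(R)$ works). I then set $B := A \skewtimes R'$. As noted in the discussion preceding \cref{thm:trivialExt}, the strict inequality $\sup(R') < 0$ ensures that $\mrm{H}^0(B) = A$, giving (1), and also that the natural projection $\pi_B : B \to \mrm{H}^0(B)$ admits the section $\tau_{A,R'}$, giving (2).

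Next I verify that $B$ is a noetherian DG-ring with bounded cohomology. As a complex of $A$-modules one has $B = A \oplus R'$, so $\mrm{H}^0(B) = A$ is noetherian by assumption, and for $i < 0$ the cohomology $\mrm{H}^i(B) = \mrm{H}^i(R')$ is finitely generated as a left $A$-module by the second axiom in the definition of a dualizing complex. Similarly $R'$ has bounded cohomology since $\opn{Res}_A(R)$ is a bounded complex of injectives, hence $B$ has bounded cohomology. The symmetric argument works on the right, so $B$ is noetherian (left and right) with bounded cohomology.

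Finally, for (3), \cref{thm:trivialExt} applied to $A$ and $R'$ yields $\injdim_B(B) < \infty$, and then \cref{thm:injDimFFD}, applied to the left noetherian DG-ring $B$ with bounded cohomology, gives $\opn{FPD}(B) \le \injdim_B(B) < \infty$, as required. No step here poses a serious obstacle: the only point requiring any care is the initial shift of $R$ to arrange $\sup(R') < 0$, which is what makes $\mrm{H}^0(A \skewtimes R')$ equal $A$ (rather than $A$ plus a contribution from the top cohomology of $R$) and thus allows both (1) and the existence of a section in (2) to hold simultaneously.
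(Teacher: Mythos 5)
Your proposal is correct and follows essentially the same route as the paper: shift the dualizing complex so its supremum is negative, form the trivial extension $B = A \skewtimes R'$, and combine \cref{thm:trivialExt} with \cref{thm:injDimFFD}. The only difference is that you spell out the verification that $B$ is noetherian with bounded cohomology and that (1) and (2) follow from $\sup(R')<0$, details the paper leaves to its earlier discussion of trivial extensions.
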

\begin{proof}
By assumption, there exists a dualizing complex $R$ over $A$,
and since by \cite[Theorem 3.9]{Yek} any shift of a dualizing complex is a dualizing complex, we may assume that $\sup(R) < 0$. 
Letting $B = A \skewtimes R$,
by \cref{thm:trivialExt} it holds that $\injdim_B(B) < \infty$,
so \cref{thm:injDimFFD} implies that $\opn{FPD}(B) < \infty$.
\end{proof}

In the next section, 
we will also require some results about flat modules and flat dimension in the presence of a dualizing complex,
which we now discuss.

\begin{prop}\label{prop:flatProj}
Let $A$ be a noetherian ring which has a dualizing complex.
Then there exist an integer $N$ such that for all $A$-modules $M$ with $\flatdim_A(M) < \infty$,
it holds that $\projdim_A(M) \le \flatdim_A(M) + N$.
\end{prop}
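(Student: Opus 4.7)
The plan is to reduce the statement to a uniform bound on the projective dimension of \emph{flat} $A$-modules. More precisely, the goal is to first establish, as a lemma, that under the hypothesis that $A$ is noetherian and admits a dualizing complex, there exists an integer $N$ such that $\projdim_A(F) \le N$ for every flat $A$-module $F$. This is a noncommutative analogue of Jensen's classical theorem (which, in the commutative case, gives the bound by the Krull dimension), and it is the main obstacle. To prove it, I would use the dualizing complex $R$: since $\opn{Res}_A(R)$ is a bounded complex of injective $A$-modules with finitely generated cohomology, it has finite injective dimension as a left $A$-module, say equal to $d$. For any flat left $A$-module $F$, the complex $R \Lotimes_A F = R \otimes_A F$ is bounded, with cohomology concentrated in the range of $R$, and standard arguments (using the duality $\RHom_A(R \otimes_A F, R) \cong \RHom_A(F, \RHom_A(R,R)) \cong F$ inherited from $A \iso \RHom_A(R,R)$) show that the projective dimension of $F$ is controlled by the injective dimension of the dualizing complex on both sides, giving a uniform bound $N_0$ depending only on $A$.

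Once the lemma is available, the proposition follows by a routine induction on $n = \flatdim_A(M)$. For $n = 0$ the module $M$ is flat and the lemma gives $\projdim_A(M) \le N_0$. For $n \ge 1$, choose a short exact sequence
\[
0 \to K \to F \to M \to 0
\]
with $F$ flat. Then $\flatdim_A(K) \le n-1$, so by induction $\projdim_A(K) \le (n-1) + N_0$, while $\projdim_A(F) \le N_0$ by the lemma. The standard estimate for projective dimension in a short exact sequence gives
\[
\projdim_A(M) \le \max\bigl(\projdim_A(F),\, \projdim_A(K) + 1\bigr) \le n + N_0.
\]
Thus $N := N_0$ works for all $M$ of finite flat dimension simultaneously.

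The hard part is the lemma on flat modules; once it is in place, the induction is immediate. An alternative route that avoids ad hoc arguments would be to invoke the existing literature on Auslander and Bass classes associated to a dualizing complex (e.g.\ the work of Christensen--Foxby--Holm and Jørgensen), where precisely such a uniform bound on $\projdim_A(F)$ for flat $F$ is established as a consequence of the finiteness of the injective dimension of the two restrictions of $R$. I would cite this and then finish with the one-line induction above.
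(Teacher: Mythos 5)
Your overall architecture (a uniform bound $\projdim_A(F)\le N_0$ for flat modules, followed by induction on $\flatdim_A(M)$ via a short exact sequence $0\to K\to F\to M\to 0$) is sound, and the induction step is correct. Moreover, your fallback option of simply citing the literature is in substance what the paper does: its entire proof is a reference to J{\o}rgensen \cite{Jor05}, whose theorem is \emph{exactly} the full statement of the proposition (with $N$ the injective dimension of a dualizing complex normalized so that $\inf(R)=0$), so if you cite that result the reduction and induction become superfluous.

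The genuine problem is with your self-contained sketch of the key lemma. The chain $\RHom_A(R\otimes_A F,R)\cong\RHom_A(F,\RHom_A(R,R))\cong F$ does not work: hom-tensor adjunction together with the homothety isomorphism $A\iso\RHom_A(R,R)$ yields $\RHom_A(R\Lotimes_A F,R)\cong\RHom_A(F,A)$, the $A$-dual of $F$, which is a complex of \emph{right} $A$-modules and is not $F$ (already $\Hom_{\mathbb{Z}}(\mathbb{Q},\mathbb{Z})=0$). The isomorphism one actually needs is the Auslander-class statement that the unit $F\to\RHom_A(R,R\Lotimes_A F)$ is an isomorphism for flat $F$ (true because it holds for $F=A$ and both sides commute with filtered colimits, $R$ having finitely generated cohomology over a noetherian ring and bounded amplitude, while flats are filtered colimits of finitely generated frees), combined with the observation that $R\otimes_A F$ is then a bounded complex of injective left modules (filtered colimits of injectives are injective over a left noetherian ring). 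Even granting all of this, you are still left with the claim that any complex of the form $\RHom_A(R,J)$, with $J$ a bounded complex of injectives, has \emph{projective} (not merely flat) dimension bounded by a constant depending only on $R$; that is precisely the nontrivial content of J{\o}rgensen's theorem, not a ``standard argument'' one can wave at. So either quote \cite{Jor05} for the statement as the paper does, or be prepared to reproduce that argument in full; as written, the first route of your proposal has a real gap at its central step.
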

\begin{proof}
This is contained in \cite[Theorem]{Jor05}.
The number $N$ is the injective dimension of a dualizing complex $R$ with the property that $\inf(R) = 0$.
\end{proof}

We denote, for a ring $A$, by $\opn{FFD}(A)$ the finitistic flat dimension of $A$. By definition, 
\[
\opn{FFD}(A) = \sup\{\flatdim_A(M) \mid M\in \opn{Mod}(A), \flatdim_A(M) < \infty\}.
\]

The above result of J{\o}rgensen implies that: 
\begin{cor}\label{cor:FPDisFFD}
Let $A$ be a noetherian ring which has a dualizing complex.
Then $\opn{FPD}(A) < \infty$ if and only if $\opn{FFD}(A) < \infty$.
\end{cor}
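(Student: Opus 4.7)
The plan is to derive both implications directly from Proposition \ref{prop:flatProj}, which provides a constant $N$ such that every $A$-module $M$ of finite flat dimension satisfies $\projdim_A(M) \le \flatdim_A(M) + N$. Combined with the elementary bound $\flatdim_A(M) \le \projdim_A(M)$, this already tells us that under our hypotheses the class of $A$-modules of finite projective dimension coincides with the class of $A$-modules of finite flat dimension, and that the two dimensions differ by at most $N$ on this common class. The corollary will then fall out by taking suprema.

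For the implication $\opn{FPD}(A)<\infty \Rightarrow \opn{FFD}(A)<\infty$, I would take an arbitrary $A$-module $M$ with $\flatdim_A(M)<\infty$; Proposition \ref{prop:flatProj} forces $\projdim_A(M)<\infty$, so by definition of $\opn{FPD}(A)$ we have $\projdim_A(M)\le\opn{FPD}(A)$, and hence $\flatdim_A(M)\le\opn{FPD}(A)$. Taking the supremum over such $M$ yields $\opn{FFD}(A)\le\opn{FPD}(A)<\infty$.

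For the converse $\opn{FFD}(A)<\infty \Rightarrow \opn{FPD}(A)<\infty$, I would take $M$ with $\projdim_A(M)<\infty$; then $\flatdim_A(M)\le\projdim_A(M)<\infty$, so $\flatdim_A(M)\le\opn{FFD}(A)$, and Proposition \ref{prop:flatProj} gives $\projdim_A(M)\le\flatdim_A(M)+N\le\opn{FFD}(A)+N$. The supremum yields $\opn{FPD}(A)\le\opn{FFD}(A)+N<\infty$.

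I do not expect a serious obstacle here: the entire content of the corollary is already packaged inside Proposition \ref{prop:flatProj}, so the proof reduces to two short applications of that bound together with the trivial inequality $\flatdim_A \le \projdim_A$. The genuine work lies in Jørgensen's theorem cited in Proposition \ref{prop:flatProj}, which is already at our disposal.
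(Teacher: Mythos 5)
Your proof is correct and follows essentially the same route as the paper: both directions are obtained by combining Proposition \ref{prop:flatProj} with the trivial inequality $\flatdim_A(M) \le \projdim_A(M)$, exactly as in the paper's own argument. No gaps.
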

\begin{proof}
If $\opn{FPD}(A) = L <\infty$ and $M$ is any $A$-module of finite flat dimension,
then by \cref{prop:flatProj} we know that $\projdim_A(M) < \infty$,
so $\flatdim_A(M) \le \projdim_A(M) \le L$, which shows that $\opn{FFD}(A) \le L$.
Conversely, if $\opn{FFD}(A) = L < \infty$,
and $M$ is an $A$-module of finite projective dimension,
then using \cref{prop:flatProj} again, 
we see that
\[
\projdim_A(M) \le \flatdim_A(M) + N \le L + N
\]
so $\opn{FPD}(A) \le L+N < \infty$.
\end{proof}

Following \cite{InKr},
for a ring $A$, we let
\[
d(A) = \sup\{\projdim_A(F) \mid F \mbox{ is a flat $A$-module}\}.
\]
It follows immediately from \cref{prop:flatProj} that
\begin{cor}\label{cor:daFinite}
Let $A$ be a noetherian ring which has a dualizing complex.
Then $d(A) < \infty$.
\end{cor}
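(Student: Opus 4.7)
The plan is to observe that this is an immediate application of Proposition \ref{prop:flatProj}. Any flat $A$-module $F$ has $\flatdim_A(F) = 0$, which is in particular finite, so the hypothesis of Proposition \ref{prop:flatProj} applies. First I would invoke that proposition to obtain the integer $N$ (namely, the injective dimension of a dualizing complex $R$ normalized so that $\inf(R) = 0$).

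Next, for an arbitrary flat $A$-module $F$, I would apply the bound directly:
\[
\projdim_A(F) \le \flatdim_A(F) + N = 0 + N = N.
\]
Since this bound is uniform in $F$, taking the supremum over all flat $A$-modules yields $d(A) \le N < \infty$. There is no real obstacle here, as the nontrivial content has already been absorbed into the statement of Proposition \ref{prop:flatProj}; the corollary is essentially a rephrasing for the special case of flat modules.
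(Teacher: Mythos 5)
Your argument is correct and is precisely the one the paper intends: the corollary is stated as an immediate consequence of \cref{prop:flatProj}, and applying that bound with $\flatdim_A(F)=0$ to get the uniform bound $\projdim_A(F)\le N$ is exactly the intended reasoning. Nothing further is needed.
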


Finally, we require the next result that relies heavily on our differential graded methods, and is used crucially in the next section.

\begin{thm}\label{thm:sequence}
Let $A$ be a noetherian ring which has a dualizing complex $S$.
Then the following are equivalent:
\begin{enumerate}
\item $\opn{FPD}(A) = +\infty$.
\item There exists an increasing sequence $a_n$ such that $a_n \in \mathbb{N}$ for all $n$, $\lim_{n\to +\infty} a_n = +\infty$,
and for each $n$, there is a left $A$-module $M_n$ with $\flatdim_A(M_n) = a_n$,
and $\opn{Tor}^A_{a_n}(R,M_n) \ne 0$ with $R$ a dualizing complex over $A$, of the form $R = S[j]$ for some $j \in \mathbb{Z}$.
\end{enumerate}
\end{thm}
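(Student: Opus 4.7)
My plan is to prove the two implications separately; the reverse is nearly immediate, while the forward implication is where the DG-ring technology from the preceding sections pays off. For (2) $\Rightarrow$ (1), each $M_n$ has $\flatdim_A(M_n) = a_n \in \mathbb{N}$ with $a_n \to +\infty$, so $\opn{FFD}(A) = +\infty$, and \cref{cor:FPDisFFD} immediately gives $\opn{FPD}(A) = +\infty$. The Tor nonvanishing hypothesis is not used in this direction.

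For (1) $\Rightarrow$ (2), I would first apply \cref{cor:FPDisFFD} to translate the hypothesis into $\opn{FFD}(A) = +\infty$, and extract a sequence of left $A$-modules $N_n$ with $b_n := \flatdim_A(N_n)$ finite and $b_n \to +\infty$. By \cref{prop:flatProj} each $N_n$ has finite projective dimension, and since $\projdim \ge \flatdim$ also $\projdim_A(N_n) \to +\infty$. Next, choose $j_0 \in \mathbb{Z}$ so that $\sup(S[j_0]) < 0$ and form the trivial extension DG-ring $B := A \skewtimes S[j_0]$. By \cref{thm:trivialExt} and \cref{thm:injDimFFD}, $\opn{FPD}(B) < \infty$. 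Since $\pi_B \colon B \to \mrm{H}^0(B) = A$ admits a section, each $N_n$ lifts to $M_n := B \otimes^{\mrm{L}}_A N_n \in \cat{D}^{\mrm{b}}(B)$ with $A \otimes^{\mrm{L}}_B M_n \cong N_n$; \cref{eqn:projdim}(1) then yields $\projdim_B(M_n) = \projdim_A(N_n)$, and the definition of $\opn{FPD}(B)$ forces
\[
\inf(M_n) \;\le\; \opn{FPD}(B) - \projdim_B(M_n) \;\longrightarrow\; -\infty.
\]

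The last step is to decode this divergence back over $A$. As an $A$-bimodule, $B$ splits as $A \oplus S[j_0]$, and since the right-hand summand $A$ is free, the derived tensor distributes to give $\opn{Res}_A(M_n) \cong N_n \oplus \bigl(S[j_0] \otimes^{\mrm{L}}_A N_n\bigr)$ in $\cat{D}(A)$. Because $\inf(N_n) = 0$, the divergence of $\inf(M_n)$ forces $\inf(S[j_0] \otimes^{\mrm{L}}_A N_n) \to -\infty$, and in particular $S \otimes^{\mrm{L}}_A N_n \ne 0$ for all $n$ sufficiently large. For such $n$, pick $k_n \in \mathbb{Z}$ with $\opn{Tor}^A_{k_n}(S, N_n) \ne 0$ and set $M_n := N_n$, $a_n := b_n$, and $R_n := S[a_n - k_n]$, which is again a dualizing complex by \cite[Theorem 3.9]{Yek}. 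The shift identity then gives $\opn{Tor}^A_{a_n}(R_n, M_n) = \opn{Tor}^A_{k_n}(S, N_n) \ne 0$, and extracting a subsequence makes $(a_n)$ strictly increasing. The main obstacle I anticipate is precisely this translation: turning the abstract numerical divergence $\projdim_B(M_n) \to \infty$ inside the DG-ring $B$ into concrete nonvanishing of Tor against a shift of $S$ over $A$. It resolves cleanly only because the section of $\pi_B$ makes the bimodule splitting $\opn{Res}_A(B) = A \oplus S[j_0]$ available, so the contribution of $S[j_0] \otimes^{\mrm{L}}_A N_n$ to $\inf(M_n)$ can be isolated.
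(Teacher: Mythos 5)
Your direction (2) $\Rightarrow$ (1) and the first half of (1) $\Rightarrow$ (2) — passing to $\opn{FFD}(A)=\infty$ via \cref{cor:FPDisFFD}, forming $B=A\skewtimes S[j_0]$ with $\sup(S[j_0])<0$, invoking \cref{thm:trivialExt} and \cref{thm:injDimFFD}, lifting along the section and deducing $\inf(M_n)\le \opn{FPD}(B)-\projdim_B(M_n)\to-\infty$ — follow the paper's route. The gap is in your final decoding step. The theorem asserts, and its application in \cref{thm:rickard} requires, that a \emph{single} integer $j$ works for the whole sequence: one fixed dualizing complex $R=S[j]$ with $\opn{Tor}^A_{a_n}(R,M_n)\ne 0$ for every $n$ (in \cref{thm:rickard} this one complex $R$ is the generator of $\cat{D}(A)$ that gets tensored against all the $M_n$ simultaneously). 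You instead pick, for each $n$, an arbitrary degree $k_n$ with $\opn{Tor}^A_{k_n}(S,N_n)\ne 0$ and then set $R_n=S[a_n-k_n]$, so your shift depends on $n$. That establishes only the weaker statement in which $j$ varies with $n$ — which is nearly contentless, since any nonvanishing Tor degree can be relabelled to $a_n$ by a shift — and it cannot be fed into the proof of \cref{thm:rickard}.

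The missing ingredient is a uniformity argument, and it needs the lower bound you never record: since $M_n=B\otimes^{\mrm{L}}_A N_n$ and $\flatdim_A(N_n)=a_n$, the definition of flat dimension gives $\inf(M_n)\ge \inf(B)-a_n=\inf(S[j_0])-a_n$, while your upper bound gives $\inf(M_n)\le \opn{FPD}(B)-a_n$. Hence $\inf(M_n)+a_n$ takes values in the fixed finite interval $[\inf(S[j_0]),\opn{FPD}(B)]$, so by pigeonhole some value $j$ occurs for infinitely many $n$; passing to that subsequence (and discarding the finitely many terms with $j-a_n\ge 0$), your splitting $\opn{Res}_A(M_n)\cong N_n\oplus\bigl(S[j_0]\otimes^{\mrm{L}}_A N_n\bigr)$ yields $\inf\bigl(S[j_0]\otimes^{\mrm{L}}_A N_n\bigr)=j-a_n$, i.e. $\opn{Tor}^A_{a_n}\bigl(S[j_0+j],N_n\bigr)\ne 0$ with the \emph{same} shift for all $n$ in the subsequence. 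This pigeonhole extraction of a constant defect $j$ is exactly how the paper concludes, and it is the step your write-up omits.
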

\begin{proof}
By \cref{cor:FPDisFFD} the condition that $\opn{FPD}(A) = +\infty$ is equivalent to $\opn{FFD}(A) = +\infty$,
and clearly (2) implies it.
Assume, conversely, that $\opn{FPD}(A) = +\infty$,
so we also have that $\opn{FFD}(A) = +\infty$.
Let $R$ be a dualizing complex over $A$ with $\sup(R) < 0$.
Following \cref{cor:existDGFFD}, 
let $B = A \skewtimes R$,
and let $K = \opn{FPD}(B) < \infty$.
Because $\opn{FFD}(A) = +\infty$,
we may find an increasing sequence $b_n$ such that $b_n \in \mathbb{N}$ for all $n$, $\lim_{n\to +\infty} b_n = +\infty$,
and for each $n$, there is a left $A$-module $M_n$ with $\flatdim_A(M_n) = b_n$.
For each $n$, consider the left DG-module
$N_n = B\otimes^{\mrm{L}}_A M_n$.
By \cref{prop:flatProj} we know that $\projdim_A(M_n) < \infty$,
which implies that
\[
\projdim_B(N_n) = \projdim_{\mrm{H}^0(B)}(\mrm{H}^0(B)\otimes^{\mrm{L}}_B N_n) = \projdim_A(M_n) < \infty.
\]
Hence, by the definition of the finitistic projective dimension over $B$, it follows that
\[
\projdim_B(N_n) + \inf(N_n) \le K.
\]
Since $b_n = \flatdim_A(M_n) \le \projdim_A(M_n) = \projdim_B(N_n)$,
we deduce that
$b_n + \inf(N_n) \le K$.
On the other hand, the fact that $N_n$ is given by a tensor product over $A$, and the definition of flat dimension implies that 
\[
\inf(N_n) \ge \inf(B) - b_n = \inf(R) - b_n.
\]
Thus, for all $n$ it holds that
\[
\inf(R) - b_n \le \inf(N_n) \le K - b_n.
\]
Since $-\infty < \inf(R) < \infty$ and $K < \infty$ are fixed constants, there exist some $\inf(R) \le j \le K$ for which the value of $\inf(N_n)$ is equal to $j-b_n$ infinitely many times.
This implies the existence of a sequence $a_n$,
given as a subsequence of $b_n$,
for which, for all $n$, 
it holds that $\flatdim_A(M_n) = a_n$,
and $\inf(N_n) = j-a_n$.
Since over $A$ there is an isomorphism $B \cong A \oplus R$,
it follows that
\[
\inf(N_n) =  \inf(B\otimes^{\mrm{L}}_A M_n) = \inf((A \oplus R) \otimes^{\mrm{L}}_A M_n) = \inf(R \otimes^{\mrm{L}}_A M_n).
\]
It follows that
\[
\inf(R[j] \otimes^{\mrm{L}}_A M_n) = \inf(N_n[j]) = \inf(N_n) - j = -a_n.
\]
This implies that $\opn{Tor}^A_{a_n}(R[j],M_n) \ne 0$,
and since $R[j]$ is a dualizing complex over $A$,
we are done.
\end{proof}

\section{Rickard's theorem and dualizing complexes}

Given a triangulated category $\mathcal{T}$,
and a set $\mathbf{S}$ of objects of $\mathcal{T}$,
recall that the localizing subcategory generated by $\mathbf{S}$ is defined to be the smallest full triangulated subcategory $\mathcal{S} \subseteq \mathcal{T}$ such that $\mathbf{S}$ is contained in $\mathcal{S}$, and such that $\mathcal{S}$ is closed under infinite coproducts. 
If $\mathcal{S} = \mathcal{T}$,
we say that $\mathbf{S}$ generates $\mathcal{T}$.

As explained in the proof of \cite[Theorem 4.4]{Rickard},
if $\mathcal{T} = \cat{D}(A)$ for a ring $A$,
it follows from Bousfield localization (\cite{Bous,Kra10}) that if $\mathbf{S} = \{M\}$ is a single object,
then $M$ generates $\cat{D}(A)$ if and only if
any $X \in \cat{D}(A)$ for which
$\opn{Hom}_{\cat{D}(A)}(M,X[n]) = 0$ for all $n\in \mathbb{Z}$ satisfies that $X \cong 0$.

For a ring $A$, we denote by $\opn{Proj}(A)$ the set of all projective $A$-modules, 
by $\opn{Inj}(A)$ the set of all injective $A$-modules,
and by $\opn{Flat}(A)$ the set of all flat $A$-modules.
The homotopy categories of complexes of projective, injective and flat modules are denoted by $\cat{K}(\opn{Proj}(A))$, $\cat{K}(\opn{Inj}(A))$ and $\cat{K}(\opn{Flat}(A))$ respectively. These are triangulated categories. See \cite{Jor2,Kra05,Neeman2008,Stov} for more details about these categories.

In \cite[Theorem 4.3]{Rickard},
Rickard, answering a question of Keller, showed that if $A$ is a finite dimensional algebra over a field, and if $\opn{Inj}(A)$ generates $\cat{D}(A)$,
then $\opn{FPD}(A) < \infty$.
Our next results uses results of the previous sections to generalize this result to a much larger class of rings,
namely, the noetherian rings which have a dualizing complex.

\begin{thm}\label{thm:rickard}
Let $A$ be a ring which is left noetherian and right noetherian, and has a dualizing complex $R$.
Assume that $\opn{Res}_A(R)$ generates $\cat{D}(A)$.
Then $\opn{FPD}(A) < \infty$.
\end{thm}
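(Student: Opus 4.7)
The plan is to argue by contradiction, starting from the sequence-extraction result of \cref{thm:sequence}. Suppose $\opn{FPD}(A) = \infty$. By \cref{cor:FPDisFFD} we also have $\opn{FFD}(A) = \infty$, so \cref{thm:sequence} produces an increasing sequence $a_n \to \infty$, a fixed integer $j$, and $A$-modules $M_n$ satisfying $\flatdim_A(M_n) = a_n$ and $\opn{Tor}^A_{a_n}(R[j],M_n) \ne 0$. Since a shift of a dualizing complex is again a dualizing complex, and since the localizing subcategory generated by an object is invariant under shifts, I may replace $R$ by $R[j]$ and assume outright that $\opn{Tor}^A_{a_n}(R, M_n) \ne 0$ for all $n$. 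The aim becomes to extract from this data a non-zero object $X \in \cat{D}(A)$ with $\mrm{R}\opn{Hom}_A(R,X) = 0$, contradicting the characterisation of generation recalled at the start of the section.

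To produce such an $X$, I would leave the comparatively rigid setting of $\cat{D}(A)$ and work inside the homotopy categories $\cat{K}(\opn{Flat}(A))$, $\cat{K}(\opn{Proj}(A))$ and $\cat{K}(\opn{Inj}(A))$. Using \cref{cor:daFinite}, each $M_n$ of finite flat dimension has finite projective dimension uniformly bounded by $a_n + d(A)$, so admits a bounded projective (in particular flat) resolution $F_n^\bullet$. I would assemble these into a complex $F = \bigoplus_n \Sigma^{-a_n} F_n^\bullet \in \cat{K}(\opn{Flat}(A))$, whose image in $\cat{D}(A)$ is the unbounded complex $\bigoplus_n \Sigma^{-a_n} M_n$. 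The Iyengar--Krause covariant Grothendieck duality, which provides a triangulated equivalence between $\cat{K}(\opn{Proj}(A))$ and $\cat{K}(\opn{Inj}(A))$ implemented by $- \otimes_A R$, then sends $F$ to an object of $\cat{K}(\opn{Inj}(A))$ whose derived projection encodes precisely the Tor groups $\opn{Tor}^A_{a_n}(R, M_n)$ in controlled degrees; the hypothesised non-vanishing guarantees this projection is non-zero to arbitrarily high cohomological depth.

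To close the argument, I would translate generation by $R$ in $\cat{D}(A)$ into a statement living in $\cat{K}(\opn{Flat}(A))$ via Neeman's recollement between $\cat{K}(\opn{Flat}(A))$, the subcategory $\cat{K}_{\text{pac}}(\opn{Flat}(A))$ of pure-acyclic complexes, and $\cat{D}(A)$. The generation hypothesis should force every pure-acyclic complex of flats to be suitably trivial with respect to testing by $R$; combining this with the Iyengar--Krause equivalence, the object built in the previous paragraph should either vanish (contradicting the non-vanishing Tors) or produce an $X \in \cat{D}(A)$ with $X \ne 0$ but $\mrm{R}\opn{Hom}_A(R,X) = 0$, contradicting generation. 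I expect the main obstacle to be this final translation step: because $R$ is typically not a compact object of $\cat{D}(A)$, the hypothesis that $R$ generates $\cat{D}(A)$ does not on its own say anything uniform about how $\mrm{R}\opn{Hom}_A(R,-)$ interacts with the infinite coproducts used in the construction of $F$, and the required uniformity has to be imported from the structure theorems of Neeman on $\cat{K}(\opn{Flat}(A))$ together with the duality of Iyengar--Krause. Making this bridge precise is the technical heart of the theorem.
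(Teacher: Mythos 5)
Your setup is right (contradiction via \cref{thm:sequence}, replacing $R$ by $R[j]$, aiming for a nonzero object right-orthogonal to $R$, and the toolkit of $\cat{K}(\opn{Flat}(A))$, $\cat{K}(\opn{Proj}(A))$, $\cat{K}(\opn{Inj}(A))$ with Iyengar--Krause duality and Neeman's results), but the central construction is missing, and you say so yourself at the end. Taking only the coproduct $F=\bigoplus_n F_n[-a_n]$ cannot work: its image in $\cat{D}(A)$ is $\bigoplus_n M_n[-a_n]$, which is a perfectly nonzero object with no reason to be orthogonal to $R$, so it produces no contradiction. The object that does the work must live in the kernel of the projection to $\cat{D}(A)$, i.e.\ it must be acyclic but nonzero in the homotopy category. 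The paper (following Rickard) gets this from the comparison map $\Phi:\bigoplus_n P_n[-a_n]\to\prod_n P_n[-a_n]$: the degreewise finiteness coming from $a_n\to\infty$ makes $\Phi$ a quasi-isomorphism, while the nonvanishing of $\opn{Tor}^A_{a_n}(R,M_n)$ shows it is not a homotopy equivalence. That last step is exactly where the noetherian-plus-dualizing-complex hypotheses replace Rickard's finite-dimensionality: one replaces $\opn{Res}_{A^{\op}}(R)$ by a bounded complex $S$ of finitely generated (hence finitely presented) right $A$-modules, so that $S\otimes_A-$ commutes with products, and then $\mrm{H}^0(S\otimes_A\Phi)$ is the map $\bigoplus_n\opn{Tor}^A_{a_n}(R,M_n)\to\prod_n\opn{Tor}^A_{a_n}(R,M_n)$, which is not an isomorphism. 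Your proposal never compares the coproduct with the product and never uses such a finitely generated model of $R$, so the Tor nonvanishing has no way to enter; the sentence about the duality sending $F$ to an object ``whose derived projection encodes the Tor groups'' does not correspond to an actual mechanism.

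The remaining steps, which you gesture at but do not supply, are: form the cone $D$ of $\Phi$ in $\cat{K}(\opn{Flat}(A))$ (acyclic, bounded below, not contractible, using Chase's theorem to know the product is a complex of flats); pass to $C=q(D)\in\cat{K}(\opn{Proj}(A))$ via Neeman's right adjoint, using \cref{cor:daFinite} and \cite[Theorem 2.7]{InKr} to see $C$ is acyclic and bounded below, and Neeman's criterion (null-homotopy of $q(D)$ iff $L\otimes_A D$ is acyclic for all $L$) together with the non-acyclicity of $S\otimes_A D$ to see $C\ncong 0$; then apply the equivalence $R\otimes_A-:\cat{K}(\opn{Proj}(A))\to\cat{K}(\opn{Inj}(A))$ to get a K-injective, hence nonzero in $\cat{D}(A)$, object $R\otimes_A C$ with $\opn{Hom}_{\cat{D}(A)}(R,(R\otimes_A C)[n])=0$ for all $n$. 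Note also that your worry about $R$ not being compact is a red herring: generation is contradicted by exhibiting a single nonzero object right-orthogonal to $R$ (via the Bousfield localization characterisation recalled at the start of the section), so no uniform compatibility of $\mrm{R}\opn{Hom}_A(R,-)$ with coproducts is needed; the genuine difficulty, and the gap in your proposal, is the construction of the acyclic non-contractible complex of projectives, which requires the product alongside the coproduct.
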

\begin{proof}
Suppose that $\opn{FPD}(A) = \infty$.
According to \cref{thm:sequence},
there exist an increasing sequence $a_n$ such that $a_n \in \mathbb{N}$ for all $n$, $\lim_{n\to +\infty} a_n = +\infty$,
and for each $n$, there is a left $A$-module $M_n$ with $\flatdim_A(M_n) = a_n$,
and $\opn{Tor}^A_{a_n}(R[j],M_n) \ne 0$ for some fixed $j$.
The fact that $R$ generates $\cat{D}(A)$ implies that $R[j]$ also generates $\cat{D}(A)$, 
so let us replace $R$ by $R[j]$,
and suppose that for all $n$ it holds that
$\opn{Tor}^A_{a_n}(R,M_n) \ne 0$.
For every $n$, 
let $P_n$ be a flat resolution of $M_n$ of length $a_n$.
Then $\mrm{H}^i(P_n[-a_n]) = 0$ for all $i \ne a_n$,
and $\mrm{H}^{a_n}(P_n[-a_n])) = M_n$.
This implies, as in the proof of \cite[Theorem 4.3]{Rickard},
that the natural inclusion
\[
\Phi: \bigoplus_n P_n[-a_n] \to \prod_n P_n[-a_n]
\]
is a quasi-isomorphism.
However, as in loc. cit, we claim that $\Phi$ is not a homotopy equivalence. 
The proof in loc. cit relies on the fact that $A$ is a finite dimensional algebra over a field, so here our proof diverges from it.
Considering the complex $\opn{Rest}_{A^{\op}}(R)$,
since its cohomology is bounded and finitely generated over $A^{\op}$,
the fact that $A$ is right noetherian implies by \cite[Proposition 7.4.9]{YeBook} that we may replace $\opn{Rest}_{A^{\op}}(R)$ by a complex $S$,
isomorphic to it in $\cat{D}(A^{\op})$,
with the property that $S$ is a bounded complex of finitely generated right $A$-modules.
The noetherian property then implies that each $S^i$ is a finitely presented right $A$-module,
so the functor $S^i\otimes_A -$ commutes with infinite products for all $i$.
By definition of the tensor product of complexes,
for any complex $T$ of left $A$-modules, 
it holds that
\[
(S \otimes_A T)^n = \bigoplus_{p + q = n} S^p\otimes_A T^q
\]
for any $n \in \mathbb{Z}$.
Since $S^i \ne 0$ only for finitely many $i$'s,
the above direct sum is finite,
which implies that the functor $S\otimes_A -$ also commutes with infinite direct products.
We now apply the functor $S\otimes_A -$ to $\Phi$.
If $\Phi$ was a homotopy equivalence, 
then $S \otimes_A \Phi$ was also an homotopy equivalence,
so that $\mrm{H}^0(S \otimes_A \Phi)$ would have to be an isomorphism.
For any $n$, we have that
\[
\mrm{H}^0(S\otimes_A P_n[-a_n]) = \mrm{H}^{-a_n}(S\otimes_A P_n) = \opn{Tor}^A_{a_n}(R,M_n) \ne 0.
\]
The fact that the functor $\mrm{H}^0(S\otimes_A -)$ commutes with both products and coproducts implies that the map
$\mrm{H}^0(S \otimes_A \Phi)$ is isomorphic to the natural map
\[
\bigoplus_n \opn{Tor}^A_{a_n}(R,M_n) \to \prod_n \opn{Tor}^A_{a_n}(R,M_n),
\]
and since this map is clearly not an isomorphism,
we deduce that $\Phi$ is not a homotopy equivalence.
Since $A$ is left coherent,
it follows by \cite[Theorem 2.1]{Chase} that $\prod_n P_n[-a_n]$ is a complex of flat $A$-modules.
Clearly, this is also the case for $\bigoplus_n P_n[-a_n]$.
This allows us to consider $\Phi$ as a morphism in the triangulated category $\cat{K}(\opn{Flat} A)$.
Let us embed it in a distinguished triangle 
\[
\bigoplus_n P_n[-a_n] \xrightarrow{\Phi} \prod_n P_n[-a_n] \to D \to \left(\bigoplus_n P_n[-a_n]\right)[1]
\]
in $\cat{K}(\opn{Flat} A)$,
where $D$ is the mapping cone of $\Phi$.
The fact that $\Phi$ is a quasi-isomorphism implies that $\mrm{H}^n(D) = 0$ for all $n \in \mathbb{Z}$,
while the fact that $\Phi$ is not a homotopy equivalence implies that $D \ncong 0$ in $\cat{K}(\opn{Flat} A)$.
By definition of the mapping cone, 
it follows that $D^n = 0$ for all $n<-1$.
We now wish to replace $D$ by a complex of projective modules.
According to \cite[Proposition 8.1]{Neeman2008},
the inclusion functor $\opn{inc}:\cat{K}(\opn{Proj}(A)) \to \cat{K}(\opn{Flat}(A))$ has a right adjoint. We denote it, following \cite{InKr}, by $q:\cat{K}(\opn{Flat}(A)) \to \cat{K}(\opn{Proj}(A))$.
Let $C = q(D) \in \cat{K}(\opn{Proj}(A))$. We make three claims about this complex of projective $A$-modules.
First, we claim that $\mrm{H}^n(C) = 0$ for all $n \in \mathbb{Z}$.
This is because, this holds for $D$, and since by \cite[Theorem 2.7(1)]{InKr}
(which holds because of \cref{cor:daFinite}), there is a quasi-isomorphism $C \to D$.
Second, we note that by \cite[Theorem 2.7(2)]{InKr},
we have that $C^n = 0$ for all $n\le -2-d(A)$.
Finally, we claim that $C \ncong 0$ in $\cat{K}(\opn{Proj}(A))$.
To show this, we invoke \cite[Corollary 9.4]{Neeman2008} which says that  
$C = q(D)$ is null homotopic if and only if for any complex of right $A$-modules $L$ it holds that $L\otimes_A D$ is an acyclic complex.
But as we have seen above, 
the map $\mrm{H}^0(S\otimes_A \Phi)$ is not an isomorphism,
and since by definition $D$ is the cone of $\Phi$,
this implies that $S\otimes_A D$ is not acyclic.
Hence, $C = q(D) \ncong 0$ in $\cat{K}(\opn{Proj}(A))$.
We have thus shown that $C$ is an acyclic bounded below complex of projective $A$-modules which is not null homotopic.

We now use the fact that $R$ is a dualizing complex,
and invoke the covariant Grothendieck duality theorem.
According to \cite[Theorem 4.8]{InKr}
(see also \cite[Theorem 4.5]{Pos} for a more general result,
and \cite{Neeman2008} for a lengthy discussion of this result),
the functor
\[
R \otimes_A - : \cat{K}(\opn{Proj} A) \to \cat{K}(\opn{Inj} A)
\]
is an equivalence of triangulated categories.
Applying this equivalence of categories to the acyclicity of $C$,
we see that for all $n\in \mathbb{Z}$,
it holds that
\[
0 = \opn{Hom}_{\cat{K}(\opn{Proj} A)}(A,C[n]) = 
\opn{Hom}_{\cat{K}(\opn{Inj} A)}(R,R\otimes_A C[n])=
\opn{Hom}_{\cat{K}(A)}(R,R\otimes_A C[n]).
\]
However, the definition of the tensor product operation on complexes,
and the fact that $R$ is bounded and $C$ is bounded below implies that $R\otimes_A C[n]$ is a bounded below complex of injective $A$-modules. 
Hence, $R\otimes_A C[n]$ is K-injective, 
so there is an equality
\[
0 = \opn{Hom}_{\cat{K}(A)}(R,R\otimes_A C[n])
= \opn{Hom}_{\cat{D}(A)}(R,R\otimes_A C[n]).
\]
Finally, we observe that the fact that $C \ncong 0$ in $\cat{K}(\opn{Proj} A)$ implies that after applying the equivalence $R \otimes_A -$,
it is still the case that $R\otimes_A C \ncong 0$ in $\cat{K}(\opn{Inj} A)$,
and since $R\otimes_A C$ is K-injective,
this implies that $R\otimes_A C \ncong 0$ in $\cat{D}(A)$.
But we saw above that 
\[
\opn{Hom}_{\cat{D}(A)}(R,R\otimes_A C[n]) = 0
\]
for all $n \in \mathbb{Z}$.
We deduce that $R\otimes_A C$ does not belong to the localizing triangulated subcategory generated by $\opn{Res}_A(R)$,
which contradicts the assumption that $\opn{Res}_A(R)$ generates $\cat{D}(A)$.
Hence $\opn{FPD}(A) < \infty$.
\end{proof}

The above result assumed that $\cat{D}(A)$ is generated by a dualizing complex, a condition that might seem to be stronger than Rickard's condition that the injective modules generate $\cat{D}(A)$. 
The next result shows that these conditions are equivalent.
It is similar to \cite[Proposition 4.7]{InKr} which is in a commutative context.

\begin{prop}\label{prop:injGenerate}
Let $A$ be a left and right noetherian ring with a dualizing complex $R$.
Then the localizing subcategory of $\cat{D}(A)$ generated by $\opn{Res}_A(R)$ is equal to the localizing subcategory of $\cat{D}(A)$ generated by the injective $A$-modules:
\[
\opn{Loc}(\opn{Res}_A(R)) = \opn{Loc}(\opn{Inj}(A)).
\]
\end{prop}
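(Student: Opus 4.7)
The plan is to establish the two inclusions separately; write $\mathcal{L}_R = \opn{Loc}(\opn{Res}_A(R))$ and $\mathcal{L}_I = \opn{Loc}(\opn{Inj}(A))$. For $\mathcal{L}_R \subseteq \mathcal{L}_I$, I would use that by the very definition of a dualizing complex, $\opn{Res}_A(R)$ is already a bounded complex of injective left $A$-modules. Descending induction on the brutal truncations $\sigma^{\ge n}R$, via the distinguished triangles
\[
\sigma^{\ge n+1}R \to \sigma^{\ge n}R \to R^n[-n] \to \sigma^{\ge n+1}R[1]
\]
in $\cat{D}(A)$, starting from $\sigma^{\ge \sup(R)+1}R = 0$, then places $R$ in $\mathcal{L}_I$.

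For the harder inclusion $\mathcal{L}_I \subseteq \mathcal{L}_R$, the plan is to exploit the covariant Grothendieck duality equivalence
\[
R \otimes_A - : \cat{K}(\opn{Proj}(A)) \xrightarrow{\simeq} \cat{K}(\opn{Inj}(A))
\]
of \cite[Theorem 4.8]{InKr} that already appears in the proof of \cref{thm:rickard}. Given any injective $A$-module $I$, regarded as an object of $\cat{K}(\opn{Inj}(A))$ concentrated in degree zero, the equivalence produces $P \in \cat{K}(\opn{Proj}(A))$ with $R \otimes_A P \cong I$ in $\cat{K}(\opn{Inj}(A))$. Because the canonical functor $\cat{K}(\opn{Inj}(A)) \to \cat{D}(A)$ is triangulated and $P$ consists of projectives, so that $R \otimes_A P$ computes $R \otimes^{\mrm{L}}_A P$, this isomorphism descends to $I \cong R \otimes^{\mrm{L}}_A P$ in $\cat{D}(A)$.

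It therefore suffices to show $R \otimes^{\mrm{L}}_A N \in \mathcal{L}_R$ for every $N \in \cat{D}(A)$. The subcategory
\[
\mathcal{S} = \{N \in \cat{D}(A) \mid R \otimes^{\mrm{L}}_A N \in \mathcal{L}_R\}
\]
is a localizing subcategory of $\cat{D}(A)$, since $R \otimes^{\mrm{L}}_A -: \cat{D}(A) \to \cat{D}(A)$ is triangulated and commutes with coproducts, while $\mathcal{L}_R$ is closed under triangles and coproducts. It contains $A$, because $R \otimes^{\mrm{L}}_A A \cong R \in \mathcal{L}_R$; and since $A$ is a compact generator of $\cat{D}(A)$, one has $\mathcal{S} = \cat{D}(A)$. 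Applying this to $N = P$ yields $I \in \mathcal{L}_R$, and since $I$ was an arbitrary injective, $\mathcal{L}_I \subseteq \mathcal{L}_R$. The only delicate point in the whole argument is shifting the isomorphism $R \otimes_A P \cong I$ from $\cat{K}(\opn{Inj}(A))$ down to $\cat{D}(A)$, and this works precisely because $P$ consists of projectives.
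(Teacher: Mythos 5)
Your easy inclusion is the same as the paper's, and your endgame is correct and even a bit slicker than the paper's: the observation that $\{N \in \cat{D}(A) \mid R \otimes^{\mrm{L}}_A N \in \opn{Loc}(\opn{Res}_A(R))\}$ is a localizing subcategory containing $A$, hence all of $\cat{D}(A)$, replaces the paper's finite d\'evissage of a bounded complex of projectives by stupid truncations. The gap is exactly at the point you flag as the only delicate one. The principle you invoke there --- that $R \otimes_A P$ computes $R \otimes^{\mrm{L}}_A P$ ``because $P$ consists of projectives'' --- is false for unbounded complexes: a complex of projective (even free) modules need not be K-flat. For instance, over $A = k[x]/(x^2)$ the acyclic complex $\cdots \xrightarrow{x} A \xrightarrow{x} A \xrightarrow{x} \cdots$ consists of free modules and is zero in $\cat{D}(A)$, yet tensoring it with $k$ yields a complex with nonzero cohomology in every degree. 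So for an arbitrary preimage $P \in \cat{K}(\opn{Proj}(A))$ of $I$ under the Iyengar--Krause equivalence, the isomorphism $R \otimes_A P \cong I$ in $\cat{K}(\opn{Inj}(A))$ does not, by the argument you give, descend to an isomorphism $I \cong R \otimes^{\mrm{L}}_A P$ in $\cat{D}(A)$; the plain tensor product and the derived one are not identified without further input.

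The missing input is precisely what the paper supplies: one can choose the preimage to be $P = q(\opn{Hom}_A(R,I))$, where $\opn{Hom}_A(R,I)$ is a \emph{bounded} complex of flat modules by \cite[Lemma 4.1(b)]{Pos}, and then $P$ is a \emph{bounded} complex of projectives by \cite[Theorem 2.7]{InKr}; that theorem applies because $d(A) < \infty$, which is \cref{cor:daFinite} and is itself a consequence of the existence of a dualizing complex. A bounded complex of projectives is K-flat, so for this particular $P$ (and hence for any preimage, which is homotopy equivalent to it) the identification $R \otimes_A P \simeq R \otimes^{\mrm{L}}_A P$, and therefore $I \cong R \otimes^{\mrm{L}}_A P$ in $\cat{D}(A)$, is legitimate, and your localizing-subcategory argument then finishes the proof. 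As written, however, the bridge from $\cat{K}(\opn{Inj}(A))$ to $\cat{D}(A)$ rests on an incorrect general claim, and repairing it requires exactly the boundedness statement from \cite[Theorem 2.7]{InKr} together with \cref{cor:daFinite} that the paper's proof uses.
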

\begin{proof}
Since $\opn{Res}_A(R)$ is a bounded complex of injective $A$-modules, 
it is clear that $\opn{Res}_A(R) \in \opn{Loc}(\opn{Inj}(A))$,
and hence, that $\opn{Loc}(\opn{Res}_A(R)) \subseteq \opn{Loc}(\opn{Inj}(A))$.
To show the converse, it is enough to show that for any injective $A$-module $I$, it holds that $I \in \opn{Loc}(\opn{Res}_A(R))$.
Let $F = \opn{Hom}_A(R,I)$.
It follows from \cite[Lemma 4.1(b)]{Pos} that $F$ is a bounded complex of flat $A$-modules.
Letting $P = q(F) \in \cat{K}(\opn{Proj}(A))$,
it follows from \cite[Theorem 2.7]{InKr} that $P$ is a bounded complex of projective $A$-modules.
By a repeated use of stupid truncation functors,
we may find a finite sequence of complexes of $A$-modules $M_1,M_2,\dots,M_n$
with the following properties:
\begin{enumerate}
\item  $M_1,M_2$ are shifts of free $A$-modules.
\item For each $2<i\le n$
there is a distinguished triangle of the form
\[
M_{i-2} \to M_{i} \to M_{i-1} \to M_{i-2}[1].
\]
\item There is an isomorphism $M_n \cong P$.
\end{enumerate}
Applying the functor $R\otimes_A -$ to this sequence,
we obtain a sequence of complexes of $A$-modules with the following properties:
\begin{enumerate}
\item  $R\otimes_A M_1, R\otimes_A M_2$ are shifts of direct sums of copies of $\opn{Res}_A(R)$.
\item For each $2<i\le n$
there is a distinguished triangle of the form
\[
R\otimes_A M_{i-2} \to R\otimes_A M_{i} \to R\otimes_A M_{i-1} \to R\otimes_A M_{i-2}[1].
\]
\item There is an isomorphism $R\otimes_A M_n \cong R\otimes_A P$.
\end{enumerate}
This shows that $R\otimes_A P \in \opn{Loc}(\opn{Res}_A(R))$,
and since by \cite[Theorem 4.8]{InKr} there is an isomorphism 
\[
R\otimes_A P = R\otimes_A q(\opn{Hom}_A(R,I)) \cong I,
\]
we deduce that $I \in \opn{Loc}(\opn{Res}_A(R))$.
\end{proof}

\begin{cor}\label{cor:main}
Let $A$ be a left and right noetherian ring which has a dualizing complex.
If the localizing subcategory of $\cat{D}(A)$ generated by the injective $A$-modules is equal to $\cat{D}(A)$ then $\opn{FPD}(A) < \infty$.
\end{cor}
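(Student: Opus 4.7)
The plan is to derive the corollary as an immediate consequence of the two preceding results, namely \cref{thm:rickard} and \cref{prop:injGenerate}. Since $A$ is left and right noetherian and admits a dualizing complex $R$, both of those results apply.

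First I would invoke \cref{prop:injGenerate} to identify the localizing subcategory generated by the injective $A$-modules with the localizing subcategory generated by $\opn{Res}_A(R)$. By hypothesis the former equals all of $\cat{D}(A)$, and therefore so does the latter; in other words, $\opn{Res}_A(R)$ generates $\cat{D}(A)$.

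At this point the hypotheses of \cref{thm:rickard} are met: $A$ is left and right noetherian, $R$ is a dualizing complex over $A$, and $\opn{Res}_A(R)$ generates $\cat{D}(A)$. Applying that theorem yields $\opn{FPD}(A) < \infty$, which is the desired conclusion.

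There is essentially no obstacle here, as all the work has been done in \cref{thm:rickard} (which uses the DG-ring machinery of the earlier sections, notably \cref{thm:sequence} and the Iyengar--Krause covariant Grothendieck duality) and in \cref{prop:injGenerate} (which uses the same duality together with a stupid truncation argument to express an arbitrary injective as an iterated extension of shifts of free modules tensored with $R$). The corollary is simply the packaging of those two statements into the form closest to Rickard's original theorem, which was the stated goal of this section.
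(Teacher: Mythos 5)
Your proposal is correct and follows exactly the paper's argument: apply \cref{prop:injGenerate} to conclude that $\opn{Res}_A(R)$ generates $\cat{D}(A)$, then invoke \cref{thm:rickard}. Nothing further is needed.
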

\begin{proof}
This follows immediately from \cref{thm:rickard} and \cref{prop:injGenerate}. 
\end{proof}

The paper \cite{Rickard} contains several interesting examples of classes of rings for which the injectives generate $\cat{D}(A)$. 

In view of the above result, 
the reader may wonder if the condition that $A$ has a dualizing complex is necessary. 
The next two examples show that in the absence of a dualizing complex,
there is no relation between generation by injectives and having finite finitistic dimension.

\begin{exa}\label{exa:nessDual}
Let $A$ be a commutative noetherian ring of infinite Krull dimension.
For a concrete example of such an $A$, see \cite[Example A.1]{Nagata}.
Since $A$ is commutative and noetherian,
the Hopkins-Neeman classification \cite{Hopkins, Neeman1992} of localizing subcategories over commutative noetherian rings implies that $\opn{Loc}(\opn{Inj}(A)) = \cat{D}(A)$.
See \cite[Theorem 3.3]{Rickard} for details.
On the other hand, since $A$ has infinite Krull dimension,
it follows from \cite[Corollary 5.5]{Bass} that $\opn{FPD}(A) = \infty$.
It might be reassuring to mention that by \cite[Corollary V.7.2]{RD},
a commutative noetherian ring of infinite Krull dimension cannot have a dualizing complex.
\end{exa}

\begin{exa}
Let $A$ be a commutative noetherian local ring which does not have a dualizing complex. 
For a concrete example,
see for instance \cite[Example 6.1]{Ogoma}.
Since $A$ is local,
it has finite Krull dimension,
so by \cite{RG} it follows that $\opn{FPD}(A)<\infty$.
As in the previous example, injectives generate $\cat{D}(A)$.
\end{exa}

\begin{rem}
The paper \cite{Foxby} gave a direct proof of the fact that a commutative noetherian ring with a dualizing complex has finite finitistic dimension.
In view of \cite[Theorem 3.3]{Rickard},
this also follows from \cref{cor:main}.
\end{rem}

\textbf{Acknowledgments.}

The author thanks Isaac Bird, Jordan Williamson and Amnon Yekutieli for helpful discussions.
This work has been supported by the grant GA~\v{C}R 20-13778S from the Czech Science Foundation.

\bibliographystyle{abbrv}
\bibliography{main}

\end{document}